\documentclass[english,11pt]{smfart}

\usepackage{etex}

\usepackage{amsbsy}
\usepackage{amsmath,amsfonts,amssymb,amsthm,mathrsfs,mathtools}

\usepackage{bm}

\usepackage[a4paper,vmargin={3cm,3cm},hmargin={3.5cm,3.5cm}]{geometry}
\usepackage[font=sf, labelfont={sf,bf}, margin=1cm]{caption}
\usepackage{graphicx}
\usepackage{epsfig}%pour les eps
\usepackage{latexsym}%encore des symboles
\usepackage{xcolor}
\usepackage{ae,aecompl}
\usepackage{soul,framed}
\usepackage{comment}

\usepackage{xcolor}
\usepackage[pdfpagemode=UseNone,bookmarksopen=false,colorlinks=true,urlcolor=blue,citecolor=blue,citebordercolor=blue,linkcolor=blue]{hyperref}
\usepackage{smfhyperref}
\usepackage[capitalize]{cleveref}

\usepackage{pstricks}
\usepackage{enumerate}
\usepackage{tikz,animate,media9}						%%%%%%%%%%
\usepackage{todonotes}
\usepackage{pifont}
\usepackage{bm,marvosym}
\usepackage{algorithm}
\usepackage{algorithmic}

\usepackage[most]{tcolorbox}
\usepackage{enumitem}

\usepackage{bbm}

\usepackage{tcolorbox}

\definecolor{aleacolor}{rgb}{0.16,0.59,0.78}

\hypersetup{
	breaklinks,
	colorlinks=true,
	linkcolor=aleacolor,
	urlcolor=aleacolor,
	citecolor=aleacolor}

\newcount\colveccount
\newcommand*\colvec[1]{
	\global\colveccount#1
	\begin{pmatrix}
		\colvecnext
	}
	\def\colvecnext#1{
		#1
		\global\advance\colveccount-1
		\ifnum\colveccount>0
		\\
		\expandafter\colvecnext
		\else
	\end{pmatrix}
	\fi
}

\newtcolorbox[auto counter]{boxedalgorithm}[2]{%
	enhanced,
	breakable,
	colback=black!2,
	colframe=black!65,
	colbacktitle=black!8,
	coltitle=black,
	fonttitle=\bfseries,
	title={Algorithm~\thetcbcounter. #2},
	label={#1},
	attach boxed title to top left={xshift=3mm,yshift=-2mm},
	boxed title style={%
		colback=white,
		colframe=black!65,
		boxrule=0.6pt,
		arc=1.5mm,
		left=1.5mm,
		right=1.5mm,
		top=0.5mm,
		bottom=0.5mm
	},
	boxrule=0.6pt,
	arc=2mm,
	left=3mm,
	right=3mm,
	top=4mm,
	bottom=3mm,
	before skip=1em,
	after skip=1em
}

\newcommand{\ndN}{\mathbb{N}}
\newcommand{\ndZ}{\mathbb{Z}}

\renewcommand{\Pr}[1]{\mathbb{P}(#1)}

\newcommand{\Prb}[1]{\mathbb{P}\left(#1\right)}

\newcommand{\Ex}[1]{\mathbb{E}[#1]}

\newcommand{\Exb}[1]{\mathbb{E}\left[#1\right]}

\newcommand{\one}{{\mathbbm{1}}}

\newcommand{\eqdist}{\,{\buildrel \mathrm{d} \over =}\,}

\newcommand{\Pois}{\mathrm{Poisson}}

\newtheorem{theorem}{Theorem}[section]

\newtheorem{corollary}[theorem]{Corollary}
\newtheorem{proposition}[theorem]{Proposition}
\newtheorem{lemma}[theorem]{Lemma}

\newtheorem{definition}[theorem]{Definition}

\numberwithin{equation}{section}

\keywords{P\'olya trees; exact-size sampling}
\title{\textbf{Linear time sampling of P\'olya trees}}
\date{}

\author{Benedikt Stufler}
\address[Benedikt Stufler]{Vienna University of Technology}
\email{benedikt.stufler at tuwien.ac.at}

\begin{document}

\vspace {-0.5cm}

\begin{abstract}
	We present an algorithm for exact-size sampling of uniformly random P\'olya trees with linear expected runtime. 
\end{abstract}

%\tableofcontents

\maketitle

\section{Introduction}

Efficient samplers for discrete structures are a useful tool in probabilistic combinatorics. They make it possible to simulate large random objects, estimate shape statistics, test conjectures, and suggest new phenomena.

A classical algorithm of Devroye~\cite{MR2888318} samples Bienaym\'e--Galton--Watson trees conditioned on their size in expected linear time. Through the enriched-tree framework, this algorithm has become the basis for a broad class of exact-size samplers for random tree-like combinatorial structures~\cite{zbMATH07755473}. Further milestones for efficient samplers of trees and related structures include~\cite{zbMATH06774439,zbMATH05039060,zbMATH06244239,zbMATH06683520,zbMATH06109904}.

These methods, however, do not directly apply to unordered unlabelled trees. The basic example is the class of P\'olya trees, that is, rooted trees considered up to isomorphism. Their recursive description is a multiset construction rather than an ordered sequence or a finite-type decoration of the offspring of each vertex. This feature prevents a direct application of the enriched-tree version of Devroye's algorithm.

There is a well-developed Boltzmann framework for unlabelled structures. Bodirsky, Fusy, Kang and Vigerske~\cite{MR2810913} introduced P\'olya--Boltzmann samplers based on cycle index sums and cycle pointing, covering in particular unordered trees and related unlabelled structures. These samplers have expected running time linear in the size of the Boltzmann object and are well suited for approximate-size generation. For exact-size generation, however, the direct Boltzmann rejection or truncation approach is not linear in the square-root singularity regime of P\'olya trees. Since the counting sequence satisfies $a_n \sim c n^{-3/2}\rho^{-n}$, the critical exact-size rejection strategy has probability of order $n^{-3/2}$ of hitting size $n$, leading to quadratic expected cost.

Recent work of Bartholdi and Diaconis~\cite{zbMATH08207340} introduced a novel Markov chain sampler for P\'olya trees. Its projection to isomorphism classes has the uniform distribution as stationary distribution. Thus, after sufficiently many steps, it provides an approximate uniform sampler. The currently available analysis, however, does not give a rigorous mixing-time bound.

A connection between P\'olya trees and branching processes was established by Panagiotou and Stufler~\cite{MR3773800}. They showed that the global shape of a large uniform P\'olya tree is governed by a large conditioned critical Bienaym\'e--Galton--Watson tree together with additional attached forests. This connects P\'olya trees to  branching processes. Nevertheless, this representation does not directly facilitate the use of Devroye's algorithm.

The present paper closes this gap. We describe a sampler that produces a uniform random P\'olya tree with a given number $n$ of vertices in expected time $O(n)$, provided elementary arithmetic operations and primitive random variate generators are counted as constant-time operations.

Our approach is inspired by ideas of Sportiello~\cite{Sportiello2021}, who obtained linear-time exact samplers for irreducible context-free structures by separating a critical bridge from conditionally independent subcritical pieces and by moving the main rejection step to the level of this bridge. P\'olya trees do not fall under that finite-type context-free setting due to the multiset construction when decomposing the branches at the root.

Let us finally mention two related approximation results. Stufler~\cite{zbMATH08207337} showed that sampling a uniform P\'olya tree with $n$ vertices and forgetting the root yields an approximation, in total variation, to the uniform unrooted unlabelled tree with $n$ vertices. Hence an exact sampler for rooted P\'olya trees is also useful for the asymptotic simulation of unrooted unlabelled trees. During the writing of the present work, Fusy and Pivoteau~\cite{fusy2026leapgeneratorscompositionschemes} obtained leap generators for composition schemes, including P\'olya trees. These generators have linear time complexity and produce exact-size objects whose distribution is asymptotically uniform in total variation, although perfect uniformity is generally lost. A fusion of this approach with Devroye's algorithm is given in parallel work~\cite{fusypanagiotou2026}. This is complementary to the exact-uniform sampler developed in the present work.

The approach of the present manuscript is robust and we will extend it to unlabelled outerplanar graphs and other classes of unlabelled graphs in subsequent work.

\subsection*{Organisation}
Section~\ref{sec:numerics} deals with numerical estimates for relevant parameters. Section~\ref{sec:elemen} describes samplers for elementary distributions appearing in the Boltzmann sampling framework. Section~\ref{sec:boltz} states a Boltzmann sampler and verifies its expected runtime. Section~\ref{sec:tilt} describes helper distributions used to randomise Boltzmann weights. Section~\ref{sec:bridge} describes a sampling procedure for random bridges involving randomised Boltzmann weights. Section~\ref{sec:poly} states the final sampler for P\'olya trees and verifies its expected runtime.

\subsection*{Notation}

We  let $\ndN = \{1, 2, \ldots\}$ and $\ndN_0 = \{0, 1, 2, \ldots\}$ denote the sets of positive and nonnegative integers. For $n \in \ndN_0$ we set $[n] = \{1, \ldots, n\}$. The $n$th coefficient of a power series $f(z)$ is denoted by $[z^n]f(z)$. All considered random variables are defined on a common probability space whose measure is denoted by $\mathbb{P}$. Unspecified limits are taken as $n$ tends to infinity. For a random variable $X$ and an event $E$ we use the notation $\Ex{X, E} = \Ex{X \one_E}$ for the expectation of $X$ multiplied by the indicator random variable $\one_E$ for the event $E$. We use $\eqdist$ to denote equality in distribution.

\subsection*{Computational model}

All complexity bounds are stated in a unit-cost real-arithmetic RAM model. Arithmetic operations, comparisons, and evaluations of $\exp$, $\log$, and $\cosh$ are counted as unit-cost operations. The generation of independent $\mathrm{Uniform}([0,1])$ and $\Pois(1)$ random variables, fixed-parameter geometric random variables, and uniform integers from intervals is also counted as having unit cost.

\section{Numerical approximations}

\label{sec:numerics}

The ordinary generating series
\[
	A(z) = \sum_{k \ge 1} a_k z^k
\]
of P\'olya trees satisfies the recursive equation
\begin{align}
	\label{eq:funk}
	A(z) = z \exp(A(z)) \Phi(z)
\end{align}
for
\[
	 \Phi(z) = \exp \left( \sum_{i \ge 2} A(z^i) / i \right).
\]
See the pioneering work by P\'olya~\cite{MR1577579}. Otter~\cite{MR0025715} derived the asymptotic formula
\begin{align}
	\label{eq:coef}
	a_n \sim c_A n^{-3/2} \rho^{-n}, \qquad n \to \infty
\end{align}
for constants $c_A \approx 0.439924$ and $\rho \approx 0.338321$.  Furthermore, Otter~\cite{MR0025715} verified that
\begin{align}
	\label{eq:arone}
	A(\rho) = 1.
\end{align}

\begin{proposition}[{\cite[Sec.~VII.5, Note~VII.21]{MR2483235}}]
	\label{pro:rho}
	We may approximate $\rho$ up to precision $2^{-k}$ using a number of arithmetic operations that grows polynomially in $k$.
\end{proposition}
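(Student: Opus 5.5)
We will compute $\rho$ as the root of an explicitly computable monotone function and bisect. By \eqref{eq:funk} and \eqref{eq:arone}, $\rho$ is characterised by $A(\rho)=1$. Since $A$ is strictly increasing on $[0,\rho]$ with $A(0)=0$, and $A(z)=\infty$ for $z>\rho$ (nonnegative coefficients and radius of convergence $\rho$), $\rho$ is the unique $z\in(0,1)$ with $A(z)=1$. Taking logarithms in \eqref{eq:funk} at $z=\rho$ gives the equivalent characterisation
\[
	1 = \log \rho + 1 + \sum_{i\ge 2} A(\rho^i)/i, \qquad\text{that is,}\qquad F(\rho)=0 \text{ for } F(z) = \log z + \sum_{i \ge 2} A(z^i)/i .
\]
The function $F$ is strictly increasing on $(0,1/\sqrt{2}]$, say, and each term $A(z^i)$ with $i\ge2$ only involves $A$ well inside its disc of convergence, since $z^i\le\rho^2<\rho$ near $z=\rho$. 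So $F$ is analytic on a neighbourhood of $\rho$ with geometrically convergent data, and bisection on $F$ over an initial interval such as $[0.3,0.4]$ gives precision $2^{-k}$ after $k$ steps, provided each sign evaluation costs $\mathrm{poly}(k)$ operations.

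For the evaluation, fix a truncation level $N=Ck$ and compute $a_1,\dots,a_N$ exactly by the classical recurrence
\[
	(n-1)a_n=\sum_{j=1}^{n-1}\Bigl(\sum_{d\mid j} d\,a_d\Bigr)a_{n-j},
\]
which needs $O(N^2)$ integer operations. Approximate $F(z)$ by $\log z+\sum_{i=2}^{N}\sum_{m\le N/i} a_m z^{im}/i$. For $z\le 0.4$ we have $z^2\le0.16<\rho$, so the coefficient bound $a_m\le C'\rho^{-m}$ from \eqref{eq:coef} makes the neglected tail at most $C''(0.16/\rho)^{N/2}+C''z^{N}$, which is $\le 2^{-2k}$ once $C$ is large. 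The cost per evaluation is $O(N^2)$, so the total cost is $O(k^3)$.

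The main obstacle is to turn sign decisions of approximate values into a guaranteed enclosure of $\rho$. For this we need a lower bound on $F'$ near $\rho$. Since $F'(z)\ge 1/z\ge 2$ on the interval, the approximation error $2^{-2k}$ can flip the computed sign only when $|z-\rho|\le 2^{-2k}$. Hence bisection with an error margin still produces an interval of length $2^{-k}$ containing $\rho$. Working in unit-cost real arithmetic, as in our computational model, we do not need to track rounding. Alternatively, one may cite the general statement of~\cite[Note~VII.21]{MR2483235} that such characteristic systems for P\'olya-type operators admit polynomial-time approximation of the singularity.
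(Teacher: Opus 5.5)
Your strategy is the standard argument behind the cited Note in Flajolet--Sedgewick; the paper gives no proof of its own, only that reference. You bisect on a monotone function built from $\log z$ and the values $A(z^i)$, $i\ge 2$, which lie well inside the disc of convergence and are computed via the recurrence of Proposition~\ref{pro:coef}. \textbf{However, the equation you bisect on is wrong.}

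Taking logarithms in \eqref{eq:funk} at $z=\rho$ gives $\log A(\rho)=\log\rho+A(\rho)+\log\Phi(\rho)$. With $A(\rho)=1$ the left-hand side is $\log 1=0$, not $1$. So the correct characterisation is $\rho e\Phi(\rho)=1$, i.e.\ $F(\rho)=-1$ for your $F$.

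In fact $F(z)=\log A(z)-A(z)\le -1$ on all of $(0,\rho]$, with equality only at $z=\rho$. Numerically $F(0.3)\approx-1.14$ and $F(0.4)\approx-0.79$. So $F$ has no zero in $[0.3,0.4]$, and the bisection as you describe it just converges to the endpoint $0.4$.

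The repair is one line: bisect on $G(z)=1+\log z+\sum_{i\ge2}A(z^i)/i$ instead. Two remarks on this:
\begin{itemize}
\item $G$ (like your $F$) is finite and strictly increasing only on $(0,\sqrt\rho)$, not on $(0,1/\sqrt2]$, since $A(z^2)$ diverges for $z>\sqrt\rho\approx0.58$. This is harmless because you stay in $[0.3,0.4]$.
\item On $[0.3,0.4]$ we have $G(0.3)\approx-0.14<0<0.22\approx G(0.4)$, $G(\rho)=0$ is the unique zero, and $G'(z)\ge 1/z\ge 2.5$.
\end{itemize}
With these, your truncation estimate, the error-margin argument for the sign decisions, and the $O(k^3)$ operation count all go through unchanged. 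For the truncation estimate you can use the explicit bound \eqref{eq:akbound} in place of the asymptotic \eqref{eq:coef}.
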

Equation~\eqref{eq:arone} implies that for all $k \ge 1$
\begin{align}
	\label{eq:akbound}
	a_k \le \rho^{-k}.
\end{align}
This readily yields explicit error bounds for Taylor approximations of~$A(z)$:
\begin{corollary}
	\label{co:taylor} 
	For all $k \ge 1$ and $0 \le t < \rho$
	\[
		0 \le A(t) - \sum_{i=1}^k a_i t^i \le \frac{(t/\rho)^{k+1}}{1 - t/\rho}.
	\]
\end{corollary}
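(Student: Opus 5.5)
The plan is a direct tail estimate using the coefficient bound \eqref{eq:akbound}. Since $0 \le t < \rho$ and the radius of convergence of $A(z)$ is $\rho$ (by \eqref{eq:coef}, or simply because \eqref{eq:arone} shows the series converges at $\rho$), the power series converges at $t$. We can therefore write
\[
	A(t) - \sum_{i=1}^k a_i t^i = \sum_{i \ge k+1} a_i t^i .
\]

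\emph{Lower bound.} Every $a_i$ counts P\'olya trees, so $a_i \ge 0$. Together with $t \ge 0$, each term of the tail is nonnegative, and hence so is their sum.

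\emph{Upper bound.} I would first justify \eqref{eq:akbound} explicitly. By \eqref{eq:arone}, $1 = A(\rho) = \sum_{j \ge 1} a_j \rho^j$, and all summands are nonnegative, so any single one satisfies $a_k \rho^k \le 1$. Inserting this termwise gives
\[
	\sum_{i \ge k+1} a_i t^i \le \sum_{i \ge k+1} (t/\rho)^i .
\]
The right-hand side is a geometric series with ratio $t/\rho \in [0,1)$, and it sums to $(t/\rho)^{k+1}/(1-t/\rho)$.

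There is no real obstacle here. The only point needing care is convergence, which is what licenses splitting $A(t)$ into a partial sum plus a tail, and monotone comparison of nonnegative series handles it automatically.
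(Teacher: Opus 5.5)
Your proposal is correct and follows the paper's proof: you write $A(t)-\sum_{i=1}^k a_i t^i$ as the nonnegative tail $\sum_{i\ge k+1}a_i t^i$, bound it termwise by $(t/\rho)^i$ using \eqref{eq:akbound}, and sum the geometric series. Your extra justification of \eqref{eq:akbound} from \eqref{eq:arone} and nonnegativity of the $a_j$ is what the paper leaves implicit. Convergence at $\rho$ only shows the radius of convergence is at least $\rho$, but that is all the argument needs.
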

\begin{proof}
We have by~\eqref{eq:akbound}
\[
	0 \le A(t) - \sum_{i=1}^k a_i t^i = \sum_{i \ge k+1} a_i t^i \le \sum_{i \ge k+1} (t/\rho)^i = \frac{(t/\rho)^{k+1}}{1 - t/\rho}.
\]
\end{proof}
\noindent We set for $k \ge 1$
\[
	\omega_k = \sum_{d \mid k} d a_d.
\]
The following recursive relation of coefficients allows us to efficiently calculate these Taylor approximations.
\begin{proposition}
	\label{pro:coef}
	We have $a_1 =1$ and for $k \ge 2$
	\begin{align*}
		a_k = \frac{1}{k-1}\sum_{j=1}^{k-1} a_j \omega_{k-j}.
	\end{align*}
	In particular, the coefficients $a_1, \ldots, a_k$ may be computed in $O(k^2)$ arithmetic operations.
\end{proposition}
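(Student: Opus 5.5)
The plan is to take the logarithmic derivative of the functional equation~\eqref{eq:funk} and then compare coefficients. First I would rewrite~\eqref{eq:funk} in the equivalent classical form
\[
	A(z) = z \exp\Big( \sum_{i \ge 1} A(z^i)/i \Big),
\]
which is exactly $z\exp(A(z))\Phi(z)$. Comparing the coefficient of $z^1$ gives $a_1 = 1$, since the exponential has constant term $1$ because $A(0)=0$.

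Next I would apply the operator $z\frac{d}{dz}$ to $\log(A(z)/z) = \sum_{i\ge1} A(z^i)/i$. The right-hand side becomes
\[
	\sum_{i \ge 1} z^i A'(z^i) = \sum_{i\ge 1}\sum_{d \ge 1} d a_d z^{id} = \sum_{k \ge 1} \omega_k z^k,
\]
after grouping terms by $k = id$. On the left, $z(A'/A - 1/z) = zA'/A - 1$. Multiplying through by $A$ then gives the identity
\[
	zA'(z) - A(z) = A(z)\sum_{k\ge1}\omega_k z^k .
\]
I expect the only point needing care to be justifying these manipulations. I would handle it by treating everything as formal power series. The series $A(z)/z$ has constant term $1$, so its logarithm is well defined, and the logarithmic derivative identity holds formally. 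Alternatively, one can differentiate $A = z\exp(\cdots)$ directly without taking logarithms, which gives $A' = A/z + A\sum_i z^{i-1}A'(z^i)$, and multiplying by $z$ yields the same identity.

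Extracting $[z^k]$ from both sides gives $(k-1)a_k = \sum_{j=1}^{k-1} a_j \omega_{k-j}$. Dividing by $k-1$ for $k\ge2$ gives the claimed recursion.

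For the complexity claim, I would compute $a_1,\dots,a_k$ and $\omega_1,\dots,\omega_k$ together, by increasing index. At step $m$, computing $a_m$ takes $O(m)$ operations, since it uses only $a_j$ and $\omega_{m-j}$ with $j,m-j<m$. After that, $\omega_m$ is available once $a_m$ is known. To keep the total cost at $O(k^2)$, I would not compute each $\omega_m$ directly from its divisor sum. Instead I would maintain an array and, whenever $a_d$ becomes known, add $d a_d$ to $\omega_{d}, \omega_{2d}, \ldots$ up to index $k$. This costs $\sum_{d\le k} k/d = O(k\log k)$ operations in total. Each $\omega_m$ is final once $a_m$ is added, because every divisor of $m$ is at most $m$. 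Summing the $O(m)$ cost of each $a_m$ over $m\le k$ gives $O(k^2)$ overall.
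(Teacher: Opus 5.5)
Your proposal is correct and is essentially the paper's proof: the paper differentiates~\eqref{eq:funk} directly (your stated alternative) to get $zA'(z) = A(z)\bigl(1+\sum_{i\ge1} z^iA'(z^i)\bigr)$, identifies $[z^k]\sum_{i\ge1} z^iA'(z^i)=\omega_k$, and compares coefficients. Your cost accounting is more explicit than the paper's one-line remark, but the sieve is optional: computing each $\omega_m$ by trial division over $d\le m$ also costs only $O(m)$, hence $O(k^2)$ in total.
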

\begin{proof}
	It is clear that $a_1=1$. Differentiating~\eqref{eq:funk} yields
	\begin{align}
		\label{eq:comp}
		zA'(z) = A(z) \left(1 + \sum_{i \ge 1} z^i A'(z^i)\right).
	\end{align}
	It is easy to see that for all $k \ge 1$
	\[
		[z^k]\sum_{i \ge 1} z^i A'(z^i) = \sum_{d \mid k} d a_d = \omega_k.
	\]
	Hence comparing coefficients on $z^k$ in~\eqref{eq:comp} yields
	\begin{align*}
		k a_k = a_k + \sum_{j=1}^{k-1} a_j \omega_{k-j}.
	\end{align*}
	Hence
	\begin{align*}
		a_k = \frac{1}{k-1}\sum_{j=1}^{k-1} a_j \omega_{k - j}.
	\end{align*}
	Thus, treating arithmetic operations as having a unit cost, this allows us to compute $a_1, \ldots, a_k$ at cost $O(k^2)$.
\end{proof}

\noindent The series $\Phi(z)$ has radius of convergence $\sqrt{\rho}$. Note that $\rho < \rho^{3/4} < \sqrt{\rho}$.

\begin{proposition}
	\label{pro:phi}
	For all integers $r\ge1$ and $\ell\ge2$ and all
	$0<s\le\rho^{3/4}$,
	\begin{align}
		\label{eq:phier} 0 &\le \log\Phi(s) - \sum_{i=2}^{\ell} \frac{1}{i} \sum_{j=1}^{r}a_j s^{ij} \\
		&\le \frac{\rho^{(r+1)/2}}{(1-\sqrt{\rho})(1-\rho^{3/2})} + \frac{\rho^{3(\ell+1)/4}}{\rho(1-\sqrt{\rho})(\ell+1)(1-\rho^{3/4})}. \nonumber
	\end{align}
	Moreover, $\Phi(s)$ may be approximated with absolute error at
	most $2^{-k}$ using $O(k^2)$ arithmetic operations, uniformly for
	$0<s\le\rho^{3/4}$.
\end{proposition}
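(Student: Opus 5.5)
We split the error $\log\Phi(s) - \sum_{i=2}^{\ell}\frac1i\sum_{j=1}^r a_j s^{ij}$ into two nonnegative parts and bound each using~\eqref{eq:akbound}. Since $\log\Phi(s)=\sum_{i\ge2}A(s^i)/i$ and all coefficients are nonnegative, the error equals
\[
E_1+E_2,\qquad E_1=\sum_{i=2}^{\ell}\frac1i\sum_{j\ge r+1}a_j s^{ij},\qquad E_2=\sum_{i\ge \ell+1}\frac{A(s^i)}{i}.
\]
This gives the lower bound $0$ immediately. Note that $s^i\le s^2\le\rho^{3/2}<\rho$, so all series converge.

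For $E_1$, I would bound $a_j s^{ij}\le (s^i/\rho)^j\le(\rho^{3i/4-1})^j$ via~\eqref{eq:akbound} and $s\le\rho^{3/4}$, and drop the factor $1/i\le 1$. Summing over $j\ge r+1$ gives at most $\rho^{(3i/4-1)(r+1)}/(1-\rho^{3i/4-1})$. For $i\ge2$ we have $3i/4-1\ge 1/2$, so $\rho^{3i/4-1}\le\sqrt\rho$. Hence each term is at most $\rho^{(3i/4-1)(r+1)}/(1-\sqrt\rho)$. The sum over $i\ge2$ of $\rho^{(3i/4-1)(r+1)}$ is geometric in $i$ with ratio $\rho^{3(r+1)/4}\le\rho^{3/2}$ and first term $\rho^{(r+1)/2}$. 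It is therefore at most $\rho^{(r+1)/2}/(1-\rho^{3/2})$, which matches the first summand.

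For $E_2$, I would use Corollary~\ref{co:taylor} with $k=0$, equivalently $A(t)\le (t/\rho)/(1-t/\rho)$. With $t=s^i\le\rho^{3i/4}$ and $i\ge\ell+1\ge3$, we get $t/\rho\le\rho^{3i/4-1}\le\rho^{5/4}\le\sqrt\rho$, so $A(s^i)\le\rho^{3i/4}/(\rho(1-\sqrt\rho))$. Bounding $1/i\le 1/(\ell+1)$ and summing the geometric series in $i\ge\ell+1$ with ratio $\rho^{3/4}$ gives exactly $\rho^{3(\ell+1)/4}/(\rho(1-\sqrt\rho)(\ell+1)(1-\rho^{3/4}))$. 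The only care needed is to check that the exponents line up with the stated constants. This bookkeeping is the main potential obstacle, and it appears to work as above.

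For the algorithmic claim, choose $r$ and $\ell$ of order $k$ so that both error terms are at most $2^{-k-2}$, for instance $r,\ell\ge Ck$ with a suitable constant $C$ depending on $\rho$. Proposition~\ref{pro:coef} computes $a_1,\dots,a_r$ in $O(r^2)=O(k^2)$ operations. The double sum has $O(\ell r)=O(k^2)$ terms. Using powers $s^{ij}$ computed incrementally, each term costs $O(1)$. A single $\exp$ then turns the approximation $L$ of $\log\Phi(s)$ into an approximation of $\Phi(s)$. Since $\log\Phi(s)$ is bounded uniformly on $(0,\rho^{3/4}]$, say by some constant $M$, the mean value theorem gives $|\Phi(s)-e^{L}|\le e^{M}\cdot|\log\Phi(s)-L|$. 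Absorbing $e^M$ into the choice of $C$ yields absolute error at most $2^{-k}$, uniformly in $s$.

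If $\rho$ is only available approximately, we use Proposition~\ref{pro:rho} to fix $r$ and $\ell$ from a rational lower bound on $\log(1/\rho)$. This does not affect the $O(k^2)$ count, treating $\rho$ as a fixed constant.
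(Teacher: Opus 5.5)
Your proposal is correct and follows the paper's proof essentially step for step. It uses the same split into the truncated-Taylor part for $2\le i\le\ell$ and the tail $i>\ell$, bounds both via $a_j\le\rho^{-j}$ and geometric series (substituting $s\le\rho^{3/4}$ before summing rather than after is immaterial), and ends with the same $O(r^2)+O(r\ell)$ operation count and mean-value-theorem transfer from $\log\Phi(s)$ to $\Phi(s)$. The only cosmetic point is that Corollary~\ref{co:taylor} is stated for $k\ge1$, but the $k=0$ case you use follows immediately from \eqref{eq:akbound}, exactly as the paper derives its tail bound.
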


\begin{proof}
	We split the error according to
	\begin{align*}
		\log\Phi(s) - \sum_{i=2}^{\ell}\frac{1}{i} \sum_{j=1}^{r}a_j s^{ij}  = \sum_{i=2}^{\ell}\frac{1}{i} \left(A(s^i)-\sum_{j=1}^{r}a_j s^{ij}\right) + \sum_{i>\ell}\frac{A(s^i)}{i}.
	\end{align*}
	For the first sum, Corollary~\ref{co:taylor} gives
	\begin{align*}
		\sum_{i=2}^{\ell}\frac{1}{i} \left(A(s^i)-\sum_{j=1}^{r}a_j s^{ij}\right) 
		&\le \sum_{i=2}^\ell \frac{1}{i} \frac{ (s^i/\rho)^{r+1} }{1 - s^i / \rho}\\
		&\le \frac{1}{1-\sqrt{\rho}} \sum_{i\ge2}(s^i/\rho)^{r+1} \\
		&= \frac{(s^2 / \rho)^{r+1}}{(1 - \sqrt{\rho})(1 - s^{r+1})} \\
		&\le \frac{\rho^{(r + 1)/2}}{(1 - \sqrt{\rho})(1 - \rho^{3/2})}.
	\end{align*}
	Here we used $r\ge1$, and hence $s^{r + 1} \le s^2 \le \rho^{3/2}$. Moreover, for $0\le t\le\rho^{3/2}$, Equation~\eqref{eq:akbound} implies
	\begin{align}
		\label{eq:abound}
		A(t) \le \sum_{j \ge 1} (t/\rho)^j = \frac{t}{\rho - t} \le \frac{t}{\rho(1 - \sqrt{\rho})}.
	\end{align}
	It follows that
	\begin{align*}
		\sum_{i>\ell}\frac{A(s^i)}{i} &\le \frac{1}{\rho(1 - \sqrt{\rho})} \sum_{i>\ell}\frac{s^i}{i} \\
		&\le \frac{s^{\ell + 1}}{\rho(1 - \sqrt{\rho})(\ell + 1)(1 - s)}  \\
		&\le \frac{\rho^{3(\ell + 1)/4}}{\rho(1 - \sqrt{\rho})(\ell + 1)(1 - \rho^{3/4})}. 
	\end{align*}
	This proves~\eqref{eq:phier}.

	Once the coefficients have been precomputed using Proposition~\ref{pro:coef} in $O(r^2)$ arithmetic operations, the finite double sum
	\[
		L_{r,\ell}(s) = \sum_{i=2}^{\ell} \frac{1}{i} \sum_{j=1}^{r}a_j s^{ij}
	\] 
	may be evaluated in $O(r \ell)$ arithmetic operations.
	
	Let $E_{r,\ell}$ denote the right-hand side of \eqref{eq:phier}, so that \eqref{eq:phier} states
	\[
		L_{r,\ell}(s) \le \log\Phi(s) \le L_{r,\ell}(s) + E_{r,\ell}.
	\]
	By the mean value theorem and monotonicity of the exponential function it follows that 
	\[
		0 \le \Phi(s) - \exp(L_{r,\ell}) \le \exp(L_{r,\ell} + E_{r,\ell})E_{r,\ell}.
	\]
	The upper bound decreases exponentially fast. Hence an error bound by $2^{-k}$ is obtained with $r = O(k)$ and $\ell = O(k)$.
\end{proof}

\section{Elementary samplers}

\label{sec:elemen}

\begin{proposition}
	\label{pro:poiss}
	Fix a constant $x_0>0$. Suppose that, uniformly for $0<x\le x_0$, the value $x$ may be approximated up to an error $2^{-k}$ using a number of arithmetic operations that grows at most polynomially in $k$. Then a $\Pois(x)$ random variable may be generated in uniformly bounded expected time.
\end{proposition}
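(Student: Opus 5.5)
The plan is to sample $\Pois(x)$ by inversion of its distribution function, using only the primitive $\mathrm{Uniform}([0,1])$ variable and lazily refined approximations of $x$. The point is that we never know $x$ exactly. So we cannot directly compare $U$ with the partial sums $F_x(m)=\sum_{j\le m} e^{-x}x^j/j!$. Instead, we compare $U$ with interval enclosures of $F_x(m)$ whose width we shrink only when the comparison is ambiguous.

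\textbf{Step 1: interval enclosures.} For a precision parameter $k$, obtain $\tilde x_k$ with $|\tilde x_k - x|\le 2^{-k}$ at cost $\mathrm{poly}(k)$, by hypothesis. Since $0<x\le x_0$, the map $x\mapsto F_x(m)$ is Lipschitz on $[0,x_0+1]$ with constant at most $1$ for every $m$, because
\[
\partial_x F_x(m) = -e^{-x}x^m/m!\in[-1,0].
\]
Evaluating $e^{-\tilde x_k}$ and the partial sums at unit cost, we get an enclosure $I_{k}(m)\ni F_x(m)$ of width at most $2\cdot 2^{-k}$. Each enclosure costs $O(m)$ operations for the sum plus $\mathrm{poly}(k)$ for the approximation of $x$.

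\textbf{Step 2: the lazy inversion loop.} Draw $U$ uniform. For $m=0,1,2,\dots$, decide whether $U\le F_x(m)$. Start with $k=k_0$ fixed, and whenever $U\in I_k(m)$ double $k$ (or increase it by one) and recompute. Output the first $m$ with $U\le F_x(m)$. The output is then exactly $\Pois(x)$-distributed. The loop terminates almost surely, since $U\neq F_x(m)$ almost surely. If one wishes to reuse the approximation of $x$ across different $m$, keep the current $k$ as a global precision that only increases; it is equally valid to restart it at $k_0$ for each $m$.

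\textbf{Step 3: expected cost.} The number of steps $m$ has the law of $X+1$, so its expectation is at most $x_0+1$. Its second moment is also uniformly bounded, which controls the $O(m)$ summation cost per step. For a fixed step $m$, precision level $k$ is reached only if $U$ lies within $2^{-k+2}$ of $F_x(m)$, which happens with probability $O(2^{-k})$. The total cost of refinements is then bounded by a sum of the form
\[
\sum_m \sum_k \mathrm{poly}(k)\,2^{-k}\cdot(\text{prob. step } m \text{ is reached}) + \mathbb{E}[\text{sum costs}].
\]
Here the term $\mathrm{poly}(k)\,2^{-k}$ is summable in $k$, while $\sum_m$ weighted by $\Prb{X\ge m}$ converges with a bound uniform in $x\le x_0$. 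A careful version uses the union bound $\Prb{\text{step } m \text{ reached and refinement to } k}\le \min\bigl(\Prb{X\ge m},\,2^{-k+2}\bigr)$. Combining it with the tail $\Prb{X\ge m}\le x_0^m/m!$ and taking the minimum gives a convergent double sum. Finally, the $O(m)$ cost of computing each enclosure times the number of refinements is handled in the same way.

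The main obstacle is this last bookkeeping. The event "refinement at step $m$" is not independent of "step $m$ is reached", so we bound its probability by the minimum of the two probabilities above. We must also check that the bounds are uniform in $x\in(0,x_0]$. The Lipschitz bound and the tail bound $x_0^m/m!$ both supply this uniformity.
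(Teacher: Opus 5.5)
Your argument is correct, and it takes a genuinely different route from the paper.

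\textbf{The paper's route.} The paper never looks at the Poisson distribution function. It fixes an integer $M>x_0$ and draws $N\eqdist\Pois(M)$ as a sum of $M$ primitive $\Pois(1)$ variables. It then thins the $N$ points independently with $\mathrm{Bernoulli}(x/M)$ coins, produced by the Bringmann--Friedrich lemma for Bernoulli variables with approximable parameter. Exactness is the Poisson thinning property. The cost bound is immediate: $\Ex{N}=M$ coin flips, each of uniformly bounded expected cost.

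\textbf{Your route.} You invert the distribution function with a single uniform $U$. You settle each comparison $U\le F_x(m)$ by lazily refined interval enclosures, which is in effect the mechanism inside the cited Bernoulli lemma, applied to every level of the CDF. This makes your proof self-contained: it needs only uniforms and $\exp$, with no $\Pois(1)$ primitive and no external lemma. The price is the two-parameter bookkeeping in Step~3. You handle it correctly through $\min\bigl(\Prb{X\ge m},O(2^{-k})\bigr)$ and $\Prb{X\ge m}\le x_0^m/m!$.

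\textbf{A quick way to finish Step~3.} Use $\min(a,b)\le\sqrt{ab}$. Up to constants, this reduces the expected refinement and summation cost to
\[
\sum_{m\ge0}(1+m)\sqrt{x_0^m/m!}\;\sum_{k\ge k_0}2^{-k/2}\bigl(\mathrm{poly}(k)+1\bigr)<\infty,
\]
uniformly in $0<x\le x_0$.

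\textbf{Two details to tidy.}
\begin{enumerate}
\item Replace $\tilde x_k$ by $\max(\tilde x_k,0)$, which does not increase the error since $x>0$, so that the Lipschitz bound on $[0,x_0+1]$ really applies. Alternatively, note that the Lipschitz constant is at most $e^{1/2}$ on $[-\tfrac12,x_0+1]$.
\item The bound $O(2^{-k})$ holds only for refinement levels $k>k_0$. The initial evaluation at each step is charged to $\Prb{X\ge m}$ instead.
\end{enumerate}
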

\begin{proof}
	Choose a fixed integer $M>x_0$. Generate independent random variables
	\[
		N_1, \ldots, N_M \longleftarrow \Pois(1)
	\]
	and set
	\[
		N = N_1 + \ldots + N_M.
	\]
	Then $N \eqdist \Pois(M)$.
	
	Since $0<x/M<1$ and $x/M$ inherits a uniform polynomial-time approximation scheme from $x$, \cite[Appendix B, Lem.~7]{zbMATH06195431} allows us to sample a $\mathrm{Bernoulli}(x/M)$ random variable in uniformly bounded expected time. 
	
	Conditionally on $N$, generate independent random variables
	\[
		V_i \longleftarrow \mathrm{Bernoulli}(x/M), \qquad 1\le i\le N,
	\]
	and return
	\[
	W=\sum_{i=1}^N V_i.
	\]
	This way, $W \eqdist \Pois(x)$.
	
	The integer $M$ is fixed, so generating $N$ has bounded expected cost.
	Moreover, $\mathbb E[N]=M$, and the Bernoulli sampling cost is uniformly
	bounded. Hence the expected runtime is bounded uniformly for $0<x\le x_0$.
\end{proof}

We let $\Pois_{\ge 1}(x)$ sample from a Poisson distribution with parameter $x > 0$ that is conditioned to be positive. If we let $\Pois_{\ge 1}(x)$  repeatedly call $\Pois(x)$ until obtaining a positive value, the number of attempts becomes large for small $x$. Hence we may use a different approach.

\begin{corollary}
	\label{co:pplus}
	If we may  approximate a constant $0<x\le1$ up to an error $2^{-k}$ using a number of arithmetic operations that grows at most polynomially in $k$, then we may generate a $\Pois_{\ge 1} (x)$ random variable in expected constant time. If the polynomial speed of approximation is uniform for a family of values for $x$ all satisfying $0<x\le1$, then the expected runtime remains uniformly bounded for that family.
\end{corollary}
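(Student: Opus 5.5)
We will generate $\Pois_{\ge 1}(x)$ as a mixture. Write $P(K=k) = e^{-x}x^k/(k!(1-e^{-x}))$ for $k\ge1$. The plan is to first flip a coin deciding whether $K=1$, and otherwise sample $K$ from $\Pois(x)$ conditioned on $K\ge2$. The conditional law on $\{K\ge2\}$ is obtained by rejection from $1+\Pois_{\ge1}$-type proposals that are cheap for all $x\le1$.

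Concretely, we use the standard representation of a Poisson variable conditioned to be positive:
\[
	K \eqdist 1 + \Pois(x(1-U)),
\]
where the proposal is weighted appropriately. A cleaner route, which we commit to, is the following rejection scheme. Propose $K' = 1 + \Pois(x)$, which has law $e^{-x}x^{k-1}/(k-1)!$ on $k\ge1$. The target-to-proposal ratio is
\[
	\frac{P(K=k)}{P(K'=k)} = \frac{x}{k(1-e^{-x})}.
\]
This ratio is maximal at $k=1$, with value $C = x/(1-e^{-x})$. Hence we accept $K'=k$ with probability $1/k$: draw $U\leftarrow\mathrm{Uniform}([0,1])$ and accept if $Uk\le 1$.

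This acceptance step needs no approximation of $x$ at all. The acceptance probability per round equals $1/C = (1-e^{-x})/x \ge 1-e^{-1}$ for $0<x\le1$, since $(1-e^{-x})/x$ is decreasing in $x$. So the expected number of rounds is at most $e/(e-1)$, uniformly in $x\in(0,1]$.

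Each round costs one call of $\Pois(x)$, one uniform variate and one comparison. By Proposition~\ref{pro:poiss} with $x_0=1$, the call of $\Pois(x)$ has expected cost bounded uniformly over any family of $x\in(0,1]$ with uniform polynomial approximation speed. Rounds are independent. Wald's identity then gives expected total cost at most $e/(e-1)$ times a uniform constant, which yields both claims of the corollary.

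The only point to check carefully is correctness of the rejection step: we must confirm that accepting with probability $1/k$ yields exactly the target law. This is immediate, since $P(K'=k)/k = e^{-x}x^{k-1}/k! \propto x^k/k!$. We also use that the uniform cost bound in Proposition~\ref{pro:poiss} carries over to the family, which holds by the hypothesis.
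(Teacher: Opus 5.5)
Your committed route (propose $1+\Pois(x)$, accept the value $k$ with probability $1/k$, and use that the per-round acceptance probability $(1-e^{-x})/x$ is at least $1-e^{-1}$ for $0<x\le 1$) is exactly the paper's proof, and it is correct. The abandoned opening material, namely the coin-flip mixture and the representation $K \eqdist 1+\Pois(x(1-U))$ (which is not an identity in law for uniform $U$), plays no role and should simply be deleted.
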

\begin{proof}
Using the $\Pois(x)$ sampler from Proposition~\ref{pro:poiss}, repeat the following procedure until it is accepted:
\[
	Z \longleftarrow 1 + \Pois(x),
\]
and accept $Z$ with probability $1/Z$.

For every $j \ge 1$, the probability that one attempt produces $Z=j$ and is accepted is
\[
	\frac{e^{-x}x^{j-1}}{(j-1)!}\frac{1}{j} =  \frac{e^{-x} x^{j-1}}{j!}.
\]
The probability that an attempt is accepted is therefore
\[
	\sum_{j\ge1}\frac{e^{-x}x^{j-1}}{j!} = \frac{1-e^{-x}}{x}.
\]
Hence, conditional on acceptance, the probability of returning $j$ is
\[
	\frac{e^{-x}x^{j-1}/j!}{(1-e^{-x})/x} = \frac{x^j}{j!(e^{x}-1)},
\]
which is the probability mass function of $\Pois_{\ge1}(x)$. 

Finally, the probability $(1-e^{-x})/x$ that an attempt is accepted satisfies
\[
\frac{1-e^{-x}}{x} \ge 1-e^{-1}
\]
since $x \le 1$ and the function $x \mapsto (1 - e^{-x})/x$ is decreasing. Hence the expected number of attempts is bounded uniformly in $0< x \le 1$.  By Proposition~\ref{pro:poiss}, each attempt has a uniformly bounded expected runtime. Hence our sampler for $\Pois_{\ge 1}(x)$ has uniformly bounded expected runtime.
\end{proof}

For any $0<s<\sqrt{\rho}$ we let $\mathrm{MAXINDEX}(s)$ produce a random non-negative integer $K \ge 0$ with distribution given by
\[
\Prb{K \le k } = \Phi(s)^{-1} \exp\left( \sum_{i = 2}^k A(s^i)/i \right).
\]
\begin{proposition}
	\label{pro:kunif}
	Uniformly for all $0 < s \le \rho^{3/4}$ we may sample $\mathrm{MAXINDEX}(s)$ in expected constant runtime.
\end{proposition}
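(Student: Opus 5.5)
The distribution of $K$ is exactly the distribution of the largest index $i\ge 2$ such that an independent $\Pois(A(s^i)/i)$ variable is nonzero (with $K=0$ or $1$ if none is nonzero; the convention is that the empty sum gives $\Prb{K\le 1}=\Prb{K\le 0}=\Phi(s)^{-1}$, so $K\in\{0\}\cup\{2,3,\ldots\}$ effectively). Indeed, if $P_i\sim\Pois(A(s^i)/i)$ are independent, then $\Prb{P_i=0 \text{ for all } i>k}=\exp(-\sum_{i>k}A(s^i)/i)=\Phi(s)^{-1}\exp(\sum_{i=2}^k A(s^i)/i)$. So I will sample $K$ by inverting its distribution function from above, walking downwards only a bounded number of steps in expectation, which avoids touching infinitely many $P_i$.

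Concretely: draw $U\sim\mathrm{Uniform}([0,1])$ and return the smallest $k\in\{0,2,3,\ldots\}$ with $U\le F(k):=\exp(-\sum_{i>k}A(s^i)/i)$. To decide this I compare $U$ with $F(k)$ for $k=0,2,3,\ldots$ in increasing order, stopping at the first success. Each comparison $U\le F(k)$ is performed lazily. I compute approximations of $\log F(k)=-\sum_{i>k}A(s^i)/i$ to precision $2^{-m}$ for $m=1,2,\dots$ until the interval certifies the comparison. The approximations use Corollary~\ref{co:taylor} for truncating each $A(s^i)$ and the tail estimate from the proof of Proposition~\ref{pro:phi}, namely $\sum_{i>\ell}A(s^i)/i\le \rho^{3(\ell+1)/4}/(\rho(1-\sqrt\rho)(\ell+1)(1-\rho^{3/4}))$. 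Equivalently, I may use Proposition~\ref{pro:phi} for $\Phi(s)$ and subtract the finitely many terms $i\le k$. Precision $2^{-m}$ costs polynomially many operations in $m$ and $k$. The probability that $m$ refinements are needed is $O(2^{-m})$, since $U$ must lie within $2^{-m+O(1)}$ of $F(k)$. Hence each comparison has expected cost polynomial in $k$, uniformly in $s\le\rho^{3/4}$.

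It remains to control the number of comparisons, that is, the value of $K$. Using \eqref{eq:abound}, $\Prb{K>k}=1-F(k)\le\sum_{i>k}A(s^i)/i\le C\rho^{3(k+1)/4}$ with $C$ independent of $s$. The expected total cost is then $\sum_k \mathrm{poly}(k)\Prb{K\ge k}<\infty$ uniformly. The delicate point, and the main obstacle, is to bookkeep the expected refinement cost of all comparisons jointly with the event $\{K\ge k\}$. I would handle it by noting that the comparison at index $k$ is performed only on $\{U>F(k-1)\}$, which has probability $O(\rho^{3k/4})$. Given the index, the refinement count has a geometric-type tail regardless of conditioning, because the conditional density of $U$ is bounded by $1/\Prb{K\ge k}$ and the bad window has length $2^{-m}$. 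This requires a short extra argument: I would instead bound the unconditional probability of both reaching index $k$ and needing $m$ refinements by $\min(C\rho^{3k/4},2^{-m+O(1)})$. Summing $\mathrm{poly}(k,m)$ against this bound is finite uniformly in $s$, which proves the claim.
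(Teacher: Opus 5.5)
Your argument is correct, but it takes a different route from the paper.

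\textbf{The paper's route.} The paper never inverts the distribution function. It realises $\mathrm{MAXINDEX}(s)$ by Poisson thinning. It draws $N\sim\Pois(B)$ proposals $J_r\ge 2$ from a geometric law $(1-q)q^{j-2}$, with fixed $q\in(\rho^{3/4},1)$ and a fixed integer $B$. Each proposal is accepted with probability $p_{s,J_r}=A(s^{J_r})/(BJ_r(1-q)q^{J_r-2})$, and the largest accepted index is returned. The probability generating function of the Poisson law then gives exactly the required distribution. The cost analysis is a Wald-type bound: there are $\Ex{N}=B$ trials, each needing one $\mathrm{Bernoulli}(p_{s,j})$ variate, obtained in expected time $O(1+\log j)$ from the lemma of Bringmann and Friedrich, together with $\Ex{\log J_r}<\infty$.

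\textbf{What each route buys.} The paper only ever approximates single values $A(s^j)$. The number of trials is decoupled from the precision required, so no joint tail bookkeeping is needed. Your sequential-search inversion uses a single uniform $U$ and no auxiliary constants. It effectively inlines the lazy-comparison mechanism that the paper delegates to the Bernoulli lemma. In exchange, it requires approximations of the tail sums $\sum_{i>k}A(s^i)/i$, which the error bounds in the proof of Proposition~\ref{pro:phi} supply uniformly for $0<s\le\rho^{3/4}$. It also forces you to control the coupling between the index reached and the number of refinements. You handle this correctly with the joint bound $\min(C\rho^{3k/4},2^{-m+O(1)})$.

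\textbf{Two small points.} To close the final summation explicitly, use $\min(a,b)\le\sqrt{ab}$, so the expected cost is bounded by a constant multiple of $\sum_{k,m}\mathrm{poly}(k,m)\,\rho^{3k/8}2^{-m/2}<\infty$, uniformly in $s$. Also, your first paragraph speaks of walking downwards, while the procedure you actually describe searches upwards from $k=0$; this is harmless.
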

\begin{proof}
	Set $s_0 = \rho^{3/4}$. With
	\[
		C = \frac{1}{\rho(1-\sqrt{\rho})},
	\]
	we have by~\eqref{eq:abound} that
	\begin{align}
		\label{eq:alinear}
		A(t)\le Ct
	\end{align}
	for all $0\le t\le\rho^{3/2}$.
	
	Fix a number $q$ with $s_0 < q <1$, and an integer $B \ge 1$ sufficiently
	large that
	\begin{align}
		\label{eq:bchoice}
		\frac{Cq^2}{2B(1-q)}<1.
	\end{align}
	These choices do not depend on $s$. For $0<s\le s_0$ and $j\ge2$ set
	\[
		p_{s,j}  = \frac{A(s^j)}{Bj(1-q)q^{j-2}}.
	\]
	Since $s^j\le s^2\le\rho^{3/2}$, it follows from~\eqref{eq:alinear} and $s<q$ that
	\[
		p_{s,j} \le \frac{Cs^j}{Bj(1-q)q^{j-2}} \le \frac{Cq^2}{Bj(1-q)} \le \frac{Cq^2}{2B(1-q)} < 1.
	\]
	Thus $p_{s,j}$ is a valid probability.
	
	Consider the following procedure. First generate
	\[
		N \longleftarrow \Pois(B).
	\]
	Conditionally on $N$, independently for each $1 \le r \le N$, generate an integer $J_r \ge 2$ according to
	\[
		\Prb{J_r = j} = (1-q)q^{j-2}, \qquad j \ge 2,
	\]
	and accept $J_r$ with probability $p_{s,J_r}$. Return the largest
	accepted integer, and return $0$ if no integer was accepted.
	
	For a single trial and every $j \ge 2$, the probability of producing and
	accepting the value $j$ is
	\begin{align}
		\label{eq:acceptedindex}
		(1-q)q^{j-2}p_{s,j} = \frac{A(s^j)}{Bj}.
	\end{align}
	Let $K$ denote the value returned by the procedure. For any $k \in \ndN_0$, conditionally on $N$ we consequently have
	\[
		\Prb{K\le k\mid N} = \left(1-\frac{1}{B} \sum_{j \ge \max(2,k+1)} \frac{A(s^j)}{j}\right)^N.
	\]
	Using the probability generating function of the Poisson distribution yields
	\begin{align*}
		\Prb{K\le k} &= \exp\left( -\sum_{j \ge \max(2,k+1)}\frac{A(s^j)}{j} \right)\\
		&= \exp\left( -\sum_{j\ge2}\frac{A(s^j)}{j} +\sum_{j=2}^k\frac{A(s^j)}{j}\right)\\
		&= \Phi(s)^{-1} \exp\left( \sum_{j=2}^k \frac{A(s^j)}{j} \right),
	\end{align*}
	where the last sum is interpreted as empty when $k \le 1$. Hence the returned value has the law prescribed for $\mathrm{MAXINDEX}(s)$.
	
	It remains to verify the uniform runtime bound. The random variable $N$ has a  Poisson distribution with the fixed parameter $B$, and hence bounded expected generation time and expectation $\Ex{N} = B$. Likewise, the random variables $J_r$ have a geometric distribution with the fixed parameter $q$, and thus bounded expected generation time~\cite{zbMATH06195431}. Moreover,
	\begin{align}
		\label{eq:fin}
		\Ex{\log J_r} < \infty.
	\end{align}
	By Corollary~\ref{co:taylor} we may uniformly for $0 < s \le s_0$ and $j \ge 2$ approximate the probability $p_{s,j} = \frac{q^2}{Bj(1-q)}\left(\frac{s}{q}\right)^j \frac{A(s^j)}{s^j}$ up to an error of at most $2^{-k}$ using $O(k^2 + \log j)$ arithmetic operations.
	
	This allows us to apply~\cite[Appendix B, Lem.~7]{zbMATH06195431}, yielding that conditionally on $J_r=j$ we may generate a $\mathrm{Bernoulli}(p_{s,j})$ random variable in expected $O(1 + \log j)$ time, with an implied constant independent of $s$ and $j$. Averaging over  $J_r$ and using~\eqref{eq:fin} it follows that each of the $N$ trials has a uniformly bounded expected runtime. Since $\Ex{N} = B$, the expected runtime of the entire procedure is bounded uniformly for all $0 < s \le \rho^{3/4}$.
\end{proof}

\section{Boltzmann samplers}

\label{sec:boltz}

The Boltzmann distribution of P\'olya trees with parameter $0<t\le \rho$ produces a P\'olya tree $T$ with an arbitrary number of vertices $|T|$ with probability $t^{|T|}/ A(t)$. 

It was shown by~\cite[Sec. 5]{MR2810913} that the following procedure $\Gamma A(t)$ samples according to this distribution.

\begin{boxedalgorithm}{alg:gammaa}{Boltzmann sampler $\Gamma A(t)$}
			\begin{enumerate}
				\item Start with a single root vertex $v$
				\item $X_t \longleftarrow \Pois(A(t))$
				\item For $i=1\ldots X_t$: 
				\subitem Let $T_i \longleftarrow  \Gamma A(t)$
				\item $F \longleftarrow \Gamma B(t)$
				\item Attach the forest $T_1, \ldots, T_{X_t}$ and the forest $F$ to $v$ by adding an edge from $v$ to the root of each tree
				\item Return the resulting tree with root vertex $v$
			\end{enumerate}
\end{boxedalgorithm}

\noindent Here each call of $\Pois(x)$ produces a fresh independent sample from a Poisson distribution with parameter $x>0$.  The recursive sampler uses the following helper sampler $\Gamma B(s)$ for $0<s < \sqrt{\rho}$ that produces a random forest of P\'olya trees.

\begin{boxedalgorithm}{alg:gammab}{Boltzmann sampler $\Gamma B(s)$}			
			\begin{enumerate}
				\item Start with an empty forest $F$
				\item $I \longleftarrow \mathrm{MAXINDEX}(s)$
				\item For $i=2\ldots I-1$:
				\subitem Let $M_i \longleftarrow \Pois(A(s^i)/i)$ 
				\item If $I \ge 2$ set $M_{I} \longleftarrow \Pois_{\ge 1}(A(s^{I})/I)$
				\item For $i=2 \ldots I$:
				\subitem For $j=1 \ldots M_i$:
				\subsubitem \quad Let $T_{i,j} \longleftarrow \Gamma A(s^i)$
				\subsubitem \quad Add $i$ identical copies of $T_{i,j}$ to $F$
				\item Return $F$
			\end{enumerate}
\end{boxedalgorithm}

Let $0<s<\sqrt{\rho}$ be given. The number of vertices $Y_s$ in the forest produced by $\Gamma B(s)$  is an upper bound for the following quantities:
\begin{itemize}
	\item The number of recursive calls to $\Gamma A$ in the procedure.
	\item The largest exponent $J$ appearing in a term $A(s^J)$ or $s^J$ that the sampler needs to evaluate to compute the argument for a $\Pois$,  $\Pois_{\ge1}$ or $\mathrm{MAXINDEX}$ sampler.
\end{itemize}
To see this, simply note that each call to $\Gamma A$ produces at least one vertex, and we create $i$ identical copies of the return value of $\Gamma A(s^i)$ in any instance of $\Gamma B(s)$. 

We let $F_s$ denote the forest generated by $\Gamma B(s)$.
The probability generating series for the number of vertices $Y_s := |F_s|$ is given by
\[
\Exb{ x^{Y_s} } = \Phi(sx) / \Phi(s).
\]
Since $\Phi(x)$ has radius of convergence $\sqrt{\rho}$ it follows that for $0<s < \sqrt{\rho}$ this distribution has finite exponential moments. Its expectation is given by
\[
	\Exb{Y_s} = s \Phi'(s) / \Phi(s) = \sum_{i \ge 2} A'(s^i)s^{i}.
\]
In particular, for any $0<s_0<\sqrt{\rho}$ we have
\begin{align}
	\label{eq:eys}
	\sup_{0 < s \le s_0} \Exb{Y_s} = \sum_{i \ge 2} A'(s_0^i)s_0^{i} < \infty.
\end{align}

\begin{lemma}
	\label{le:boltzb}
	For $0<s\le\rho^{3/4}$ the expected runtime for a call to $\Gamma B(s)$ is uniformly bounded.
\end{lemma}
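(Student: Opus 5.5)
Our plan is to bound the total cost of $\Gamma B(s)$ by a constant times $1 + Y_s$ plus the cost of the $\Gamma A$ calls, and then take expectations. We must also check that the $\Gamma A$ calls, which run with arguments $s^i \le \rho^{3/2} < \rho$, stay uniformly cheap. So we split the runtime into three parts: the call to $\mathrm{MAXINDEX}(s)$, the Poisson samplers for the $M_i$, and the recursive calls $\Gamma A(s^i)$ together with the copying of trees.

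\textbf{The first two parts.} By Proposition~\ref{pro:kunif}, the $\mathrm{MAXINDEX}(s)$ step has uniformly bounded expected cost for $0<s\le\rho^{3/4}$.

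For the $M_i$, note that $A(s^i)/i \le C s^i/i \le 1$ for $i\ge 2$ by~\eqref{eq:alinear}. The parameters $A(s^i)/i$ can be approximated to error $2^{-k}$ in $O(k^2 + \log i)$ operations by Corollary~\ref{co:taylor} and Proposition~\ref{pro:coef}, writing $A(s^i) = s^i \cdot (A(s^i)/s^i)$ as in the proof of Proposition~\ref{pro:kunif}. An $O(\log i)$ overhead (computing $s^i$) is harmless. We will argue, as in Proposition~\ref{pro:kunif}, that each Poisson or $\Pois_{\ge1}$ draw costs expected $O(1+\log i)$, using Proposition~\ref{pro:poiss} and Corollary~\ref{co:pplus} with their Bernoulli subroutine. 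There are $I-1$ such draws. Since $I \le \max(1, Y_s)$ (if $I\ge2$ then $M_I\ge1$ forces at least $I$ vertices), this part costs $O(1+Y_s\log(1+Y_s)) = O(1+Y_s^2)$. Its expectation is finite uniformly by the exponential moments of $Y_s$: $\Exb{x^{Y_s}}=\Phi(sx)/\Phi(s)$ is bounded uniformly for $s\le \rho^{3/4}$ and a fixed $x>1$ with $x\rho^{3/4}<\sqrt\rho$. Actually a linear bound suffices if we simply bound $\log i \le i \le Y_s$ per draw summed over at most $Y_s$ draws, giving $O(Y_s^2)$, which is still controlled.

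\textbf{The recursive calls.} Creating $i$ copies of $T_{i,j}$ costs $O(i|T_{i,j}|)$, and these copies are disjoint parts of $F_s$, so copying costs $O(Y_s)$ in total. It remains to bound the internal cost of $\Gamma A(s^i)$. This is the main obstacle, since $\Gamma A$ recursively calls $\Gamma B$, so the statement is intertwined with the runtime of $\Gamma A$.

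We plan a joint induction or fixed-point argument. Let $R(t)$ denote the expected runtime of $\Gamma A(t)$ for $t \le \rho^{3/2}$. Inside $\Gamma A(t)$ every $\Gamma B$ is called with argument $t\le\rho^{3/2}\le \rho^{3/4}$, and every nested $\Gamma A$ has argument at most $\rho^{3/2}$. The cost of $\Gamma A(t)$ is then at most a constant times the size of the output tree plus the costs of the $\Gamma B$ calls excluding their inner $\Gamma A$ calls, each of which is uniformly bounded in expectation by the first two parts. The number of $\Gamma B$ calls equals the number of $\Gamma A$ calls, which is at most the tree size.

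For $t\le\rho^{3/2}$ the expected Boltzmann size $tA'(t)/A(t)$ is uniformly bounded, because $\rho^{3/2}<\rho$ is strictly subcritical. To make this rigorous without circularity, we will use Wald's identity over the whole recursion tree of calls: the expected cost is at most a constant times the expected total number of calls plus the expected sum of $O(1+Y^2)$ terms over the calls. By independence, the latter is at most a constant times the expected number of calls, which is $O(\Exb{|T|})$. Summing over $i$, $j$ in $\Gamma B(s)$ then gives expected cost $O(\Exb{Y_s}) + O(1)$, which is uniformly bounded by~\eqref{eq:eys}.
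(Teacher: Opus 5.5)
Your proposal is correct and follows essentially the same route as the paper: every nested $\Gamma A$ call, every elementary draw and all copying are charged to vertices of $F_s$, each elementary draw has uniformly bounded expected cost, and the conclusion follows from $\sup_{0<s\le\rho^{3/4}}\mathbb{E}[Y_s]<\infty$ in \eqref{eq:eys}. Your explicit Wald-type summation over the recursion tree, and your handling of the $O(\log i)$ overhead for evaluating $A(s^i)/i$ via exponential moments of $Y_s$, fill in details the paper leaves implicit; the detour through subcriticality of $\Gamma A(t)$ for $t\le\rho^{3/2}$ is unnecessary, since every nested $\Gamma A$ call contributes at least one vertex to $F_s$, so the total number of calls is already at most $Y_s$.
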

\begin{proof}
	By the discussion above, the number of recursive calls to $\Gamma A$, the largest exponent $J$ for which a value $A(s^J)$ has to be evaluated when computing a parameter for an elementary sampler, and the total size of generated forests are bounded by a constant multiple of $1 + Y_s$. Moreover, by~Proposition~\ref{pro:poiss},  Corollary~\ref{co:pplus}, and Proposition~\ref{pro:kunif} each call to the elementary random variate generators has uniformly bounded expected runtime. Since $s \le \rho^{3/4} < \sqrt\rho$, Equation~\eqref{eq:eys} gives
	\[
	\sup_{0 < s \le \rho^{3/4}} \Exb{Y_s} < \infty.
	\]
	Hence the expected runtime of $\Gamma B(s)$ is uniformly bounded.
\end{proof}

\begin{proposition}
	\label{pro:invariant}
	For any integer $y \ge 0$ with $[z^y]\Phi(z) >0$ the conditional distribution $(F_s \mid Y_s = y)$ does not depend on $s$. In other words, for all $0<s,t< \sqrt{\rho}$ we have
	\[
		(F_s \mid Y_s = y) \eqdist (F_t \mid Y_t = y).
	\]
\end{proposition}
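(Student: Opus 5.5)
The plan is to show that, for every fixed outcome $f$ of the sampler $\Gamma B(s)$, the probability $\Prb{F_s = f}$ factorises as $s^{|f|}$ times a weight $w(f)$ that does not depend on $s$, divided by the normaliser $\Phi(s)$. Once we have
\[
	\Prb{F_s = f} = \frac{w(f)\, s^{|f|}}{\Phi(s)},
\]
conditioning on $Y_s = y$ gives
\[
	\Prb{F_s = f \mid Y_s = y} = \frac{w(f)}{\sum_{|f'| = y} w(f')},
\]
which is free of $s$. The denominator is positive exactly when $[z^y]\Phi(z) > 0$, because summing the displayed formula over $|f| = y$ shows $\sum_{|f|=y} w(f) = [z^y]\Phi(z)$.

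To compute the law of $F_s$, first determine the joint law of the multiplicities $(M_i)_{i\ge2}$ produced by steps (2)–(4), with $M_i = 0$ for $i > I$.
\begin{itemize}
	\item The event $\{I = k\}$ has probability
	\[
		\Phi(s)^{-1}\exp\Bigl(\sum_{i=2}^{k-1} A(s^i)/i\Bigr)\bigl(\exp(A(s^k)/k) - 1\bigr).
	\]
	\item Conditionally on $I = k$, the variables $M_2, \ldots, M_{k-1}$ are independent Poisson, and $M_k$ is a Poisson variable conditioned to be positive.
\end{itemize}
Multiplying these, the $\exp$ and $\exp - 1$ factors cancel against the Poisson normalisations. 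The result is that $(M_i)_{i\ge2}$ are independent with $M_i \sim \Pois(A(s^i)/i)$, and $I$ is the largest index with $M_i > 0$. This is the step where the definition of $\mathrm{MAXINDEX}$ is designed to make things work, and I expect it to be routine.

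Next, conditionally on the $M_i$, each $T_{i,j}$ is an independent Boltzmann sample with parameter $s^i$. So a given multiset of trees $T_{i,j}$ has probability
\[
	\prod_{i}\Bigl(e^{-A(s^i)/i}\,\frac{1}{M_i!\, i^{M_i}}\prod_{j} s^{i|T_{i,j}|}\Bigr).
\]
Here the factors $A(s^i)^{M_i}$ from the Poisson probabilities cancel with the $A(s^i)$ normalisers of the Boltzmann laws. The total power of $s$ equals $\sum_{i,j} i|T_{i,j}| = |F_s|$, since each $T_{i,j}$ contributes $i$ copies to the forest. The prefactor $\prod_i e^{-A(s^i)/i}$ equals $\Phi(s)^{-1}$.

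The main obstacle is that the forest $F$ is an unlabelled object: the same $F$ can arise from several sequences $(T_{i,j})$. This happens through reordering within the same $i$, and also through different decompositions, since for instance $4$ copies of a tree can come from $M_2$ with two copies of the tree or from $M_4$ with one copy. I would not identify the law of $F$ explicitly. Instead, I would observe that $\Prb{F_s = f}$ is a sum, over all internal outcomes $(M_i, (T_{i,j}))$ mapping to $f$, of terms each of the form $s^{|f|}\,\Phi(s)^{-1}\, c$ with $c$ independent of $s$. This holds because every internal outcome mapping to $f$ has total size $|f|$. Summing therefore gives $w(f)\, s^{|f|}/\Phi(s)$ with $w(f)$ independent of $s$, regardless of the combinatorics of the multiplicities. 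The only care needed is to treat $(T_{i,j})_j$ as an ordered sequence, so that the $1/M_i!$ is part of an honest probability of an internal outcome, and to check that the sum of these internal-outcome probabilities is countable and convergent for $s < \sqrt{\rho}$. Convergence follows from $\Exb{x^{Y_s}} = \Phi(sx)/\Phi(s)$.
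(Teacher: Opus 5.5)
Your proposal is correct and follows essentially the paper's route. You reduce $\Gamma B(s)$ to independent $M_i \sim \Pois(A(s^i)/i)$, compute each internal configuration's probability as $s^{y}\Phi(s)^{-1}\prod_{i}(i^{m_i} m_i!)^{-1}$, and conclude because $F_s$ is a function of the configuration. Beyond the paper, you verify the $\mathrm{MAXINDEX}$ reduction directly (the paper cites Bodirsky et al.) and make the many-to-one pushforward explicit. Your convergence concern is moot, since only finitely many configurations map to a fixed $f$.
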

\begin{proof}
	The condition $[z^y]\Phi(z) >0$ ensures that both $Y_s = y$ and $Y_t  = y$ are events with positive probability.
	
	As argued in~\cite[Sec. 5]{MR2810913}, the sampler $\Gamma B(s)$ may equivalently skip drawing the largest index $I$ and instead generate \[M_i \longleftarrow \Pois(A(s^i)/i)\]
	for all $i \ge 2$, so that the second loop runs over all $i \ge 2$ as well.  The final forest consists of the disjoint union of $i$ copies of $T_{i,j} \longleftarrow \Gamma A(s^i)$ for all $i \ge 2$ and $1 \le j \le M_i$. Its size is given by 
	\[
		\sum_{i \ge 2} \sum_{j=1}^{M_i} i |T_{i,j}|.
	\]
	
	Let $m_i \ge 0$ be a nonnegative integer for all $i \ge 2$ so that $m_i = 0$ for all but finitely many $i \ge 2$. For each $i \ge 2$ and $1 \le j \le m_i$ let $t_{i,j}$ be a P\'olya tree, such that
	\[
		\sum_{i \ge 2} \sum_{j=1}^{m_i} i |t_{i,j}|= y.
	\]
	We have
	\begin{align*}
		&\Prb{ M_i  = m_i \text{ and }  T_{i,j} = t_{i,j} \text{ for all $i \ge 2$, $1 \le j \le m_i$}} \\
		&\quad = \left( \prod_{i \ge 2} \frac{ (A(s^i)/i)^{m_i} / m_i! }{ \exp(A(s^i)/i )} \right) \prod_{i \ge 2} \prod_{1 \le j \le m_i} \frac{ s^{i|t_{i,j}|} }{A(s^i)} \\
		&\quad = \frac{s^y}{\Phi(s)} \prod_{i \ge 2}\frac{1}{i^{m_i} m_i!}.
	\end{align*}
	When restricting to configurations that correspond to a forest with $y$ vertices, only the factor $\prod_{i \ge 2}\frac{1}{i^{m_i} m_i!}$ depends on the actual forest. Therefore, the conditional distribution $(F_s \mid Y_s = y)$ does not depend on the parameter $s$. 
\end{proof}

\section{Randomized tilting of the Boltzmann distribution}
\label{sec:tilt}

We fix $n\ge2$ and set $N = n - 1$. Choose a fixed constant $\epsilon>0$ and define
\begin{align*}
	\lambda = \min\left(\frac{\epsilon}{\sqrt N}, \frac{1}{4} \log(1 / \rho)\right).
\end{align*}
Then
\[
	\rho e^\lambda \le \rho^{3/4} < \sqrt\rho,
\]
so the tilted variables $Y_{\rho e^{\pm\lambda}}$ are well-defined. For triples $(x,y,f)$ with $(x,y) \in \ndN_0^2 \setminus \{(0,0)\}$ and $f$ a forest with $y = |f|$ we set
\begin{align}
	\label{eq:qdist}
	q_{N}(x,y,f) = \frac{\Pr{X_\rho = x} \Pr{F_\rho = f} \cosh(\lambda(x+y))}{B_N}.
\end{align}
with 
\begin{align*}
	B_N = \sum_{(x,y)\ne(0,0)}\Pr{X_\rho = x} \Pr{Y_\rho = y}\cosh(\lambda(x+y)).
\end{align*}
Note that $q_N$ is a probability mass function. The normalising constant $B_N$ is explicit.  For $\sigma \in \{+, -\}$ define
\begin{align*}
	C_\sigma = \exp(e^{\sigma\lambda}-1) \frac{\Phi(\rho e^{\sigma\lambda})}{\Phi(\rho)} - \rho.
\end{align*}
Because $X_\rho$ and $Y_\rho$ are independent,
\[
\Exb{e^{\sigma\lambda(X_\rho + Y_\rho)}} = \exp(e^{\sigma\lambda}-1)
\frac{\Phi(\rho e^{\sigma\lambda})}{\Phi(\rho)}.
\]
Since $\Prb{X_\rho=0, Y_\rho=0} = \rho$, it follows that
\begin{align*}
	C_\sigma = \Exb{e^{\sigma\lambda(X_\rho + Y_\rho)} , (X_\rho,Y_\rho)\ne(0,0)}
\end{align*}
and therefore
\begin{align}
	\label{eq:bn}
	B_N = \frac{C_+ + C_-}{2}.
\end{align}

Consider the following procedure.

\begin{boxedalgorithm}{alg:qn}{Sampler for $q_N$}
			\begin{enumerate}
				\item Choose a sign $\sigma_0 \in \{+, -\}$ with probability
				\begin{align*}
					\Prb{\sigma_0 = +}=\frac{C_+}{C_+ + C_-}, \qquad \Prb{\sigma_0 = -}=\frac{C_-}{C_+ + C_-}.
				\end{align*}
				\item Sample independently
				\begin{align*}
					X &\longleftarrow \Pois(e^{\sigma_0 \lambda}), \\
					F &\longleftarrow \Gamma B(\rho e^{\sigma_0 \lambda}), \\
					Y &:= |F|.
				\end{align*}
				\item If $(X,Y)=(0,0)$, resample $(X,Y,F)$ under the same sign $\sigma_0$ until~$(X,Y) \ne (0,0)$.
				\item Return $(X,Y,F)$.
			\end{enumerate}
\end{boxedalgorithm}

\noindent Here $|F|$ denotes the total number of vertices in the forest $F$.

\begin{lemma}
	The triple  $(X,Y,F)$ returned by the preceding procedure is distributed according to $q_N$.
\end{lemma}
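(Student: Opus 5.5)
The plan is to compute the law of the output directly. Fix a triple $(x,y,f)$ with $(x,y)\ne(0,0)$ and $|f|=y$. I will split according to the sign $\sigma_0$ and compute each conditional probability separately.

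Fix $\sigma\in\{+,-\}$ and write $t=\rho e^{\sigma\lambda}$. Since $A(\rho)=1$ by \eqref{eq:arone}, the variable $X_\rho$ is $\Pois(1)$. Hence
\[
\Pr{X_t = x} = \Pr{X_\rho=x}\, e^{\sigma\lambda x} e^{-(e^{\sigma\lambda}-1)}.
\]
The sampler $\Gamma B(t)$ produces the forest $f$ with probability $t^{y}\prod_{i\ge 2}\frac{1}{i^{m_i}m_i!}/\Phi(t)$. This follows from the computation in the proof of Proposition~\ref{pro:invariant}, summed over the labelled configurations yielding $f$. The combinatorial factor does not depend on the parameter, so
\[
\Pr{F_t=f} = \Pr{F_\rho=f}\, e^{\sigma\lambda y}\,\frac{\Phi(\rho)}{\Phi(t)}.
\]
Multiplying the two, a single attempt under sign $\sigma$ produces $(x,y,f)$ with probability
\[
\Pr{X_\rho=x}\Pr{F_\rho=f}\,e^{\sigma\lambda(x+y)}\big/\bigl(\exp(e^{\sigma\lambda}-1)\Phi(t)/\Phi(\rho)\bigr).
\]

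Next I handle the rejection step. Repeating until $(X,Y)\ne(0,0)$ conditions on that event. The event has probability $1-\Pr{X_t=0,Y_t=0}$. Since $\Pr{X_t=0}\Pr{Y_t=0}=e^{-e^{\sigma\lambda}}/\Phi(t)$, this equals $1-\rho\,\Phi(\rho)/\bigl(\exp(e^{\sigma\lambda}-1)\Phi(t)\bigr)$, using $\rho=e^{-1}/\Phi(\rho)$, which comes from \eqref{eq:funk} at $z=\rho$. Multiplying by the normaliser $\exp(e^{\sigma\lambda}-1)\Phi(t)/\Phi(\rho)$ gives exactly $C_\sigma$. So the conditional probability of returning $(x,y,f)$ given $\sigma_0=\sigma$ is
\[
\Pr{X_\rho=x}\Pr{F_\rho=f}\,e^{\sigma\lambda(x+y)}/C_\sigma.
\]
Termination is almost sure, because the acceptance probability is positive.

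Finally, I average over the sign with weights $C_\sigma/(C_++C_-)$. The factors $C_\sigma$ cancel, which leaves
\[
\Pr{X_\rho=x}\Pr{F_\rho=f}\,\frac{e^{\lambda(x+y)}+e^{-\lambda(x+y)}}{C_++C_-}.
\]
By \eqref{eq:bn} this equals $q_N(x,y,f)$.

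The only delicate step is the second displayed identity, the exponential tilting of the forest law. It requires that the probability of a given forest under $\Gamma B(t)$ be proportional to $t^{|f|}/\Phi(t)$ with a parameter-free factor. I take this from the configuration computation in Proposition~\ref{pro:invariant}, summed over the configurations $(m_i,t_{i,j})$ that represent the same forest; that sum is independent of $s$.
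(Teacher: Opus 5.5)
Your proof is correct and follows essentially the same route as the paper: tilting the $\Pois(1)$ law and the forest law of $\Gamma B$, identifying the rejection normaliser with $C_\sigma$ via $\Pr{X_\rho=0}\Pr{Y_\rho=0}=\rho$, and averaging over the sign. The differences are cosmetic, and there are two. First, you read the forest tilting directly off the configuration weights in the proof of Proposition~\ref{pro:invariant}, whereas the paper combines the generating function of $Y_s$ with the statement of that proposition. Second, your label $X_t$ should denote the $\Pois(e^{\sigma\lambda})$ variate of Algorithm~\ref{alg:qn}, not the paper's $X_t\sim\Pois(A(t))$, which is undefined for $t=\rho e^{\lambda}>\rho$; your formula is the right one for the variable actually sampled.
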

\begin{proof}
	For $\sigma \in \{+,-\}$ we have conditional on $\sigma_0=\sigma$ that the result of the sampling procedure is equal to a given triple $(x,y,f)$ with  $(x,y) \in \ndN_0^2 \setminus \{(0,0)\}$ and $f$ a forest with $|f|=y$ with probability
	\begin{align}
		\label{eq:expression}
		\frac{\Prb{ \Pois(e^{\sigma \lambda}) = x } \Prb{F_{\rho e^{\sigma \lambda}} = f}}{\Prb{ (\Pois(e^{\sigma \lambda}),  Y_{\rho e^{\sigma \lambda}}) \ne (0,0)}}.
	\end{align}
	Since $A(\rho)=1$, we have $X_\rho\sim \Pois(1)$, and therefore
	\begin{align*}
		\Prb{\Pois(e^{\sigma\lambda}) = x} = \Prb{X_\rho=x} \exp\left(\sigma \lambda x + 1 - e^{\sigma \lambda}\right).
	\end{align*}
	Moreover, from
	\[
		\Exb{x^{Y_s}} = \frac{\Phi(sx)}{\Phi(s)}
	\]
it follows that
\begin{align*}
	\Prb{Y_{\rho e^{\sigma\lambda}}=y} = \Prb{Y_\rho=y} e^{\sigma\lambda y} \frac{\Phi(\rho)}{\Phi(\rho e^{\sigma\lambda})}.
\end{align*}
By Proposition~\ref{pro:invariant} this implies
\begin{align*}
	\Prb{F_{\rho e^{\sigma\lambda}} = f} = \Prb{F_\rho = f} e^{\sigma \lambda y} \frac{\Phi(\rho)}{\Phi(\rho e^{\sigma \lambda})}.
\end{align*}
Consequently, the numerator in~\eqref{eq:expression} simplifies to
\begin{align*}
	\Prb{\Pois(e^{\sigma\lambda}) = x} \Prb{F_{\rho e^{\sigma\lambda}} = f} 
		&= \frac{\Prb{X_\rho = x} \Prb{F_\rho = f} e^{\sigma\lambda(x+y)}}{
		\exp(e^{\sigma\lambda} - 1) \Phi(\rho e^{\sigma\lambda})/\Phi(\rho)} \\
		&= \frac{\Prb{X_\rho = x} \Prb{F_\rho = f} e^{\sigma\lambda(x+y)}}{\rho + C_\sigma}.
\end{align*}
Likewise it follows that the denominator in~\eqref{eq:expression} is given by
\begin{align*}
	\Prb{ (\Pois(e^{\sigma \lambda}),  Y_{\rho e^{\sigma \lambda}}) \ne (0,0)} &= 1 - \frac{\Prb{X_\rho = 0} \Prb{Y_\rho = 0} }{\rho + C_\sigma} \\
	&= \frac{C_\sigma}{\rho + C_\sigma}.
\end{align*}
Hence~\eqref{eq:expression} reduces to
\begin{align*}
			\frac{\Prb{ \Pois(e^{\sigma \lambda}) = x } \Prb{F_{\rho e^{\sigma \lambda}} = f}}{\Prb{ (\Pois(e^{\sigma \lambda}),  Y_{\rho e^{\sigma \lambda}}) \ne (0,0)}} = \frac{\Prb{X_\rho = x} \Prb{F_\rho = f} e^{\sigma\lambda(x+y)}}{C_\sigma}.
\end{align*}
Using~\eqref{eq:bn} it follows that
\begin{align*}
	\Prb{(X,Y,F) = (x,y,f)} &= \sum_{\sigma \in \{+,-\}} \frac{\Prb{X_\rho = x} \Prb{F_\rho = f} e^{\sigma\lambda(x+y)}}{C_+ + C_-} \\
	&= \frac{\Pr{X_\rho = x} \Pr{F_\rho = f} \cosh(\lambda(x+y))}{B_N}.
\end{align*}
This completes the proof.
\end{proof}

\section{Sampler for the bridge}

\label{sec:bridge}

For $1 \le k \le N$, let
\begin{align*}
	c_k = \left\lfloor\frac Nk\right\rfloor
\end{align*}
and
\[
	r_k = N - k c_k \in \{0, \ldots, k-1\}.
\]
Define
\begin{align*}
	D_N(k) = \cosh(\lambda c_k)^{-(k-r_k)} \cosh(\lambda(c_k+1))^{-r_k}.
\end{align*}
Finally let
\[
	s(k,m) = B_N^k \rho^m\frac{(k+m-1)!}{k! m!}D_N(k)
\]
and 
\begin{align*}
	\widehat{M}_N = \max_{\substack{1 \le k \le N\\ 0\le m \le N}} s(k,m).
\end{align*}
For $1 \le k \le N$ fixed, we have for all $0 \le m < N$ that
\begin{align*}
	\frac{s(k,m+1)}{s(k,m)} = \rho \frac{k+m}{m+1}.
\end{align*}
This ratio decreases monotonically in $m$. Consequently, $m \mapsto s(k,m)$ attains its maximum at $0$ or $N$ or in the  set $\ndZ \cap [(\rho k-1)/(1-\rho), (\rho k-1)/(1-\rho)+1]$. Hence $\widehat{M}_N$ may be found by checking $O(1)$ values of $m$ for each $k$, hence in $O(N)$ arithmetic operations once $B_N$ is known.

\begin{lemma}
	\label{le:bern}
	For every vector $(u_1,\ldots,u_k) \in \ndN^k$ with $\sum_{i=1}^k u_i=N$,
	\begin{align*}
		\prod_{i=1}^k\cosh(\lambda u_i)^{-1}\le D_N(k).
	\end{align*}
\end{lemma}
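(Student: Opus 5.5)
We have to show that $\sum_i \log\cosh(\lambda u_i)$, over positive integers $u_i$ summing to $N$, is minimized by the balanced vector: $k-r_k$ entries equal to $c_k$ and $r_k$ entries equal to $c_k+1$. Taking $-\log$, the claimed inequality
\[
	\prod_{i=1}^k\cosh(\lambda u_i)^{-1}\le D_N(k)
\]
is equivalent to
\[
	\sum_{i=1}^k g(u_i) \ \ge\ (k-r_k)\,g(c_k) + r_k\, g(c_k+1), \qquad g(x)=\log\cosh(\lambda x).
\]
This is a discrete convexity (majorization) statement.

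The key observation is that $g$ is convex on $\mathbb{R}$, since $g''(x)=\lambda^2/\cosh^2(\lambda x)>0$. For convex $g$, the sum $\sum g(u_i)$ over integer vectors with fixed sum is minimized at the balanced vector. I would prove this by a standard smoothing argument. Suppose some pair satisfies $u_a\ge u_b+2$. Replace $(u_a,u_b)$ by $(u_a-1,u_b+1)$. This keeps the sum $N$ and keeps all entries positive, since $u_b+1\ge 2$ and $u_a-1\ge u_b+1\ge 2$. By convexity of $g$,
\[
	g(u_a)-g(u_a-1)\ \ge\ g(u_b+1)-g(u_b),
\]
because the slopes of $g$ over unit intervals increase as the interval moves right and $u_a-1\ge u_b$. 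Hence $\sum g(u_i)$ does not increase under the move.

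The potential $\sum u_i^2$ strictly decreases under each move, by $2(u_a-u_b)-2>0$. So after finitely many moves we reach a vector whose entries pairwise differ by at most $1$. Such a vector with sum $N$ must consist of $k-r_k$ entries equal to $c_k=\lfloor N/k\rfloor$ and $r_k$ entries equal to $c_k+1$. This is because if all entries lie in $\{c,c+1\}$ with at least one equal to $c$, then $kc\le N<k(c+1)$, which forces $c=c_k$, and the number of entries equal to $c+1$ is $N-kc=r_k$. Chaining the inequalities along the moves gives the claim.

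No serious obstacle is expected. The only care needed is the slope comparison for convex functions at integer points and identifying the balanced vector correctly. Note that positivity of the $u_i$ is not even needed, since $g$ is convex on all of $\mathbb{R}$ and the same argument works on $\mathbb{N}_0^k$.
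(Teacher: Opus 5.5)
Your proposal is correct and follows the paper's proof. Both use convexity of $g(t)=\log\cosh(\lambda t)$ and the balancing move $(a,b)\mapsto(a+1,b-1)$ for $b\ge a+2$ to reach the vector with $k-r_k$ entries $c_k$ and $r_k$ entries $c_k+1$; you additionally make explicit the termination via the potential $\sum u_i^2$ and the identification of the balanced vector, which the paper leaves implicit.
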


\begin{proof}
	The function $g(t) = \log\cosh(\lambda t)$ is convex on $[0,\infty)$.  If two positive integers $a,b$ satisfy $b\ge a+2$, then convexity gives
	\[
		g(a) + g(b) \ge g(a+1) + g(b-1).
	\]
	Repeatedly applying this balancing operation shows that, among all  $(u_1,\ldots,u_k) \in \ndN^k$ with $\sum_{i=1}^k u_i=N$, the sum $\sum_{i=1}^k g(u_i)$ is minimised when the entries are as equal as possible: $k-r_k$ entries equal to $c_k$ and $r_k$ entries equal to $c_k + 1$. 
	Therefore
	\[
	\sum_{i=1}^k\log\cosh(\lambda u_i) \ge (k-r_k)\log\cosh(\lambda c_k) + r_k \log\cosh(\lambda(c_k+1)).
	\]
	Exponentiating the negative of this inequality gives
	\[
			\prod_{i=1}^k\cosh(\lambda u_i)^{-1} \le \cosh(\lambda c_k)^{-(k-r_k)} \cosh(\lambda(c_k+1))^{-r_k}.
	\]
	This completes the proof.
\end{proof}

Consider the following sampler.

\begin{boxedalgorithm}{alg:bridge}{Sampler for the bridge}
Repeat the following attempt until it is accepted.
\begin{enumerate}
	\item Sample independent triples
	\[
		(X_i,Y_i, F_i), \qquad i=1,2,\ldots,
	\]
	from $q_N$ using Algorithm~\ref{alg:qn} until
	\[
		S_j = \sum_{i=1}^j(X_i+Y_i)
	\]
	satisfies $S_j\ge N$.
	\item If $S_j>N$, reject this attempt and restart. If $S_j = N$, put $K = j$ and
	\[
		M = 1 + \sum_{i=1}^K(X_i - 1).
	\]
	If $M < 0$, reject this attempt and restart.
	
	\item Make a Bernoulli test with probability
	\begin{align*}
		R = \frac{ B_N^K \rho^M \frac{(K + M - 1)!}{K! M!} \prod_{i=1}^K \cosh(\lambda(X_i+Y_i))^{-1}} {\widehat{M}_N}.
	\end{align*}
	If this Bernoulli test fails, reject this attempt and restart.
	
	\item Let $L = K + M$.  Choose a uniformly random $K$-element subset of $\{1,\ldots,L\}$.  Insert the list entries of $(X_1, Y_1, F_1), \ldots, (X_K, Y_K, F_K)$ in those positions, preserving their order, and fill the other positions with $(0, 0, \emptyset)$.  This produces a list $(\xi_1, \zeta_1, \tau_1), \ldots, (\xi_L, \zeta_L, \tau_L)$ of length $L$.
	
	\item Return $(\xi_1, \zeta_1, \tau_1), \ldots, (\xi_L, \zeta_L, \tau_L)$.
\end{enumerate}
\end{boxedalgorithm}

Note that Lemma~\ref{le:bern} ensures that in the sampler  we have $R \in [0,1]$, hence the step with the Bernoulli test is valid. There is no overlap of notation with $X_i$ and $Y_i$ for $i \in \ndN$ with the random variates $X_s$ for $0 < s \le \rho < 1$ and $Y_t$ for $0 < t < \sqrt{\rho} < 1$.

The output \[
	(\xi_1, \zeta_1, \tau_1), \ldots, (\xi_L, \zeta_L, \tau_L)
\]
of Algorithm~\ref{alg:bridge} satisfies 
\begin{align}
	\sum_{i=1}^L (\zeta_i + 1) = n
\end{align}
and
\begin{align}
	\sum_{i=1}^L (\xi_i -1) = -1. 
\end{align}
Moreover, it holds that $\zeta_i = |\tau_i|$ for $1 \le i \le L$. In this sense, we may call the output of Algorithm~\ref{alg:bridge} an admissible bridge:

\begin{definition}
	We use the term \emph{admissible bridge} of length $1 \le \ell \le n$ to denote a family $(x_i, y_i,f_i)_{1 \le i \le \ell}$ with  $x_i, y_i \in \ndN_0$ satisfying $\sum_{i=1}^\ell (x_i -1) = -1$ and $\sum_{i=1}^\ell (y_i + 1) = n$  and $f_i$ a forest of size $y_i$ for all $1 \le i \le \ell$. We say an admissible bridge is an \emph{admissible path} if additionally $\sum_{i=1}^r (x_i - 1) \ge 0$ for all $1 \le r < \ell$.
\end{definition}

Recall that $F_\rho$ denotes the random forest produced by $\Gamma B(\rho)$.

\begin{lemma}
	\label{le:main}
	Let $(x_i, y_i,f_i)_{1 \le i \le \ell}$ denote an admissible bridge. During one attempt in Algorithm~\ref{alg:bridge}, the probability of accepting and producing the output list
	\[
		(\xi_i, \zeta_i, \tau_i)_{1 \le i \le L} = (x_i, y_i,f_i)_{1 \le i \le \ell}
	\]
	is given by
	\begin{align*}
		\frac{1}{\widehat{M}_N}\frac{1}{\ell} \prod_{i=1}^\ell  \Prb{X_\rho = x_i} \Prb{F_\rho = f_i}.
	\end{align*}
\end{lemma}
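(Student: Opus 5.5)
Fix an admissible bridge $(x_i,y_i,f_i)_{1\le i\le\ell}$ and let $i_1<\dots<i_k$ be the positions with $(x_i,y_i)\ne(0,0)$. Since $\sum_i(y_i+1)=n$ gives $\sum_i(x_i+y_i)=\sum_i(x_i-1)+\sum_i(y_i+1)=N$, and $N\ge1$, we have $k\ge1$. The positions with $(x_i,y_i)=(0,0)$ carry the empty forest $f_i=\emptyset$, since $|f_i|=y_i=0$. Put $m=\ell-k$.

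\textbf{Step 1: the events producing this output.} One attempt returns this list exactly when three things happen. First, the first $k$ triples drawn from $q_N$ are $(x_{i_j},y_{i_j},f_{i_j})$ for $j=1,\dots,k$. Their partial sums of $x+y$ stay below $N$ before index $k$ (each summand is positive) and equal $N$ at $j=k$, so the stopping rule gives $K=k$ and $S_K=N$. Second, $M=1+\sum_j(x_{i_j}-1)=1+\sum_{i=1}^\ell(x_i-1)+m=m\ge0$, so the attempt is not rejected at step 2 and $L=K+M=\ell$. Third, the Bernoulli test succeeds and the uniform $K$-subset chosen is exactly $\{i_1,\dots,i_k\}$. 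Conversely, no other sequence of draws yields this output, because the nonzero entries and their order determine the drawn triples.

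\textbf{Step 2: multiply the probabilities.} The probability of the draws is $\prod_{j=1}^k q_N(x_{i_j},y_{i_j},f_{i_j})$. The Bernoulli success probability is $R$ with $K=k$ and $M=m$. The subset probability is $\binom{\ell}{k}^{-1}=k!\,m!/\ell!$. Substituting \eqref{eq:qdist}, the factors $B_N^k$ cancel, and so do the products $\prod_j\cosh(\lambda(x_{i_j}+y_{i_j}))$. Also $\frac{(k+m-1)!}{k!m!}\cdot\frac{k!m!}{\ell!}=\frac1\ell$. This leaves
\[
	\frac{1}{\widehat M_N}\frac{1}{\ell}\,\rho^m\prod_{j=1}^k\Prb{X_\rho=x_{i_j}}\Prb{F_\rho=f_{i_j}}.
\]

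\textbf{Step 3: account for the zero positions.} Each zero position satisfies $\Prb{X_\rho=0}\Prb{F_\rho=\emptyset}=e^{-1}\cdot\Phi(\rho)^{-1}=\rho$. This uses $A(\rho)=1$, $\Prb{Y_\rho=0}=1/\Phi(\rho)$, and \eqref{eq:funk} at $z=\rho$, which gives $\rho e\Phi(\rho)=1$; the paper already used this identity when it noted $\Prb{X_\rho=0,Y_\rho=0}=\rho$. Hence $\rho^m$ is exactly the product over the $m$ zero positions, and the expression becomes $\frac{1}{\widehat M_N}\frac1\ell\prod_{i=1}^\ell\Prb{X_\rho=x_i}\Prb{F_\rho=f_i}$, as claimed.

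The main point needing care is Step 1: the correspondence between attempts and outputs must be bijective. In particular we need $k\ge1$, the reconstructed $M$ must equal $m$ precisely because of the admissibility condition $\sum_i(x_i-1)=-1$, and the stopping time must be exactly $k$.
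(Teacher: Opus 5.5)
Your proposal is correct and follows essentially the same route as the paper's proof. Both extract the nonzero positions and multiply three factors: the $q_N$-probability of the draws, the acceptance probability $R$, and the subset probability $\binom{\ell}{k}^{-1}$. Both then cancel $B_N^k$ and the $\cosh$ factors and absorb $\rho^m$ via $\Prb{X_\rho=0}\Prb{F_\rho=\emptyset}=\rho$. You additionally make explicit what the paper leaves implicit: $k\ge1$, the stopping index being exactly $k$, $M=m$ from admissibility, and the identity $\rho e\Phi(\rho)=1$.
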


\begin{proof}
	Write
	\[
	I = \{i \in \{1, \ldots, \ell\} \mid (x_i,y_i) \ne (0,0)\} = \{i_1 < \cdots < i_k\}
	\]
	for the set of nonzero positions, and put $m = \ell - k$.  For
	$1 \le r \le k$, set
	\[
		(\bar{x}_r, \bar{y}_r, \bar{f}_r) = (x_{i_r}, y_{i_r}, f_{i_r}).
	\]
	Then 
	\[
		\sum_{r=1}^k (\bar{x}_r + \bar{y}_r) = \sum_{i=1}^\ell (x_i + y_i) = (\ell - 1) + (n -\ell) =  N.
	\]
	Hence, if during one attempt in Algorithm~\ref{alg:bridge} we accept and produce an output list $(x_i, y_i, f_i)_{1 \le i \le \ell}$, then in the sampler we had $K=k$, $L = \ell$, $M = \ell - k$, and
	\[
		(X_r, Y_r, F_r) = (\bar{x}_r, \bar{y}_r, \bar{f}_r), \qquad 1 \le r \le k,
	\]
	and $I$ equal to the uniformly at random chosen subset of $[\ell]$ in the fourth step. The probability of sampling these first two coordinates in this order is
	\[
		\prod_{r=1}^k q_N(\bar{x}_r, \bar{y}_r, \bar{f}_r).
	\]
	Conditional on this event, the probability of passing the Bernoulli test in the third step is given by
	\[
		\frac{1}{\widehat{M}_N} B_N^k \rho^m \frac{(k+m-1)!}{k! m!} \prod_{r=1}^k \cosh(\lambda(\bar{x}_r + \bar{y}_r))^{-1}.
	\]
	Finally, the probability of choosing the set $I$ as insertion positions in the fourth step is given by
	\[
		\binom{\ell}{k}^{-1}.
	\]
	Multiplying the three contributions gives by~\eqref{eq:qdist}, $\Prb{X_\rho=0}\Prb{F_\rho=\emptyset}=\rho$ and $k+m=\ell$ that
	\begin{align*}
		&\Prb{\text{this attempt accepts and outputs the target list}}  \\
		&\quad = \left(\prod_{r=1}^k q_N(\bar{x}_r,\bar{y}_r, \bar{f}_r)\right) \frac{1}{\widehat{M}_N} B_N^k \rho^m \frac{(k+m-1)!}{k! m!} \left(\prod_{r=1}^k \cosh(\lambda(\bar{x}_r + \bar{y}_r))^{-1}\right)  \frac{k!\,m!}{\ell!} \\
		&\quad = \left( \prod_{r=1}^k \Pr{X_\rho = \bar{x}_r} \Pr{F_\rho = \bar{f}_r}\right) \frac{\rho^m}{\widehat{M}_N} \frac{1}{\ell} \\
		&\quad = \frac{1}{\widehat{M}_N}\frac{1}{\ell} \prod_{i=1}^\ell  \Prb{X_\rho = x_i} \Prb{F_\rho = f_i}.
	\end{align*}
	This completes the proof.
\end{proof}

We recall the following standard result known as the cycle lemma~\cite{zbMATH04102170,zbMATH03476417,zbMATH01195781}.

\begin{proposition}[{\cite[Lem. 15.3]{MR2908619}}]
	\label{pro:cycle}
	For any sequence $k_1, \ldots, k_\ell$ of non-negative integers with $\sum_{i=1}^\ell (k_i -1) = -1$ there exists exactly one integer $1 \le t \le \ell$ such that the shifted sequence
	\[
	(\bar{k}_1, \ldots, \bar{k}_\ell) = (k_{t+1}, \ldots, k_\ell, k_1, \ldots, k_{t})
	\]
	satisfies for all $1 \le r < \ell$
	\[
	\sum_{i=1}^r (\bar{k}_i - 1) \ge 0.
	\]
	The integer $t$ is given by the smallest index $1 \le t \le \ell$ such that the sequence
	\[
	h_j = \sum_{i=1}^j (k_i -1), \qquad 1 \le j \le \ell
	\]
	attains its minimum at $j=t$.
\end{proposition}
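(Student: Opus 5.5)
The plan is the standard discrete intermediate-value argument. Set $h_0 = 0$ and $h_j = \sum_{i=1}^j (k_i - 1)$ for $1 \le j \le \ell$, so that $h_\ell = -1$. Each increment satisfies $k_i - 1 \ge -1$, so the walk only moves down in unit steps. Let $\mu = \min_{1\le j\le \ell} h_j$, which satisfies $\mu \le -1$, and let $t$ be the smallest index with $h_t = \mu$.

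\textbf{Existence.} We show this $t$ yields a good rotation. Writing $\bar h_r = \sum_{i=1}^r(\bar k_i - 1)$ for the rotated sequence, we have
\begin{align*}
\bar h_r = h_{t+r} - h_t \quad \text{for } t + r \le \ell, \qquad \bar h_r = h_\ell - h_t + h_{t+r-\ell} = -1 - \mu + h_{t+r-\ell} \quad \text{for } t + r > \ell.
\end{align*}
In the first case $\bar h_r \ge 0$ because $\mu$ is the minimum. In the second case we need $h_j \ge \mu + 1$ for $1 \le j < t$, which holds precisely because $t$ is the \emph{first} index attaining the minimum, the values being integers. (When $t = \ell$ there is no wraparound to check in the first case, and the second case reduces to the first-index property.)

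\textbf{Uniqueness.} This is the step needing the most care. Suppose a rotation by $t'$ is good, meaning all partial sums $\bar h_r \ge 0$ for $r < \ell$. Then the last partial sum of that rotation equals $-1$, and the same two formulas as above translate the condition into two requirements:
\begin{align*}
h_j \ge h_{t'} \ \text{ for } j > t', \qquad h_j \ge h_{t'} + 1 \ \text{ for } j < t'.
\end{align*}
For $j < t'$ this comes from $\bar h_r = -1 - h_{t'} + h_j \ge 0$. Together these say $h_{t'}$ is the minimum of $h_1,\ldots,h_\ell$ and is strictly smaller than every earlier value. Hence $t'$ is the first index at which the minimum is attained, so $t' = t$.

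The only delicate point is the boundary indexing, namely the cases $t = \ell$ and $r$ landing exactly at the wrap position. This is handled by using the convention $h_0 = 0$ and noting that for $j = 0$ the corresponding constraint reads $\bar h_{\ell - t} = -1 - h_t \ge 0$, which holds since $h_t \le -1$.
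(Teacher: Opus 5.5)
Your proof is correct and complete. The paper does not prove this statement itself but cites it from Janson; your argument (rotate just after the first minimum, then show that a good rotation forces its starting index to be a first minimum) is the standard proof behind that citation and agrees with the characterisation of $t$ in the statement. One remark: the hypothesis $k_i \ge 0$ is never used in your argument. Only integrality enters, via $h_j > \mu \Rightarrow h_j \ge \mu+1$, so your proof in fact covers arbitrary integer sequences with sum $-1$.
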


\begin{lemma}
	\label{le:evac}
	Let $p_N$ denote the probability that a single attempt in Algorithm~\ref{alg:bridge} is accepted. Then
	\[
		p_N = \frac{a_n \rho^n}{\widehat{M}_N}.
	\]
	Consequently Algorithm~\ref{alg:bridge} terminates almost surely. Moreover, for every 
	$(x_i,y_i,f_i)_{1\le i\le \ell}$ satisfying the conditions in Lemma~\ref{le:main}, the probability that Algorithm~\ref{alg:bridge}, including all repeated attempts until the first acceptance, returns this list is given by
	\[
	\frac{1}{a_n \rho^n}\frac{1}{\ell} \prod_{i=1}^{\ell} \Prb{X_\rho=x_i}\Prb{F_\rho=f_i}.
	\]
\end{lemma}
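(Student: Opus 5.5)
By Lemma~\ref{le:main}, the acceptance probability of one attempt is
\[
p_N = \frac{1}{\widehat{M}_N}\sum_{\ell=1}^{n}\frac{1}{\ell}\sum \prod_{i=1}^{\ell}\Prb{X_\rho=x_i}\Prb{F_\rho=f_i},
\]
where the inner sum ranges over all admissible bridges of length $\ell$. The outputs of an attempt are pairwise distinct, so these events are disjoint. Every accepted attempt produces an admissible bridge, so nothing else contributes. Thus the task is to identify this weighted sum with $a_n\rho^n$.

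I will use the cycle lemma, Proposition~\ref{pro:cycle}. The weight $\prod_i \Prb{X_\rho=x_i}\Prb{F_\rho=f_i}$ is invariant under cyclic shifts of the list. Apply the proposition to the sequence $(x_i)$. The set of admissible bridges of length $\ell$ then splits into orbits under cyclic rotation, and each orbit contains exactly one admissible path. Moreover, the map sending (path, shift $t\in[\ell]$) to the rotated bridge is a bijection onto admissible bridges of length $\ell$. The subtle point is periodic sequences, where distinct shifts would give equal lists. This cannot happen: uniqueness of $t$ in Proposition~\ref{pro:cycle} means that no nontrivial rotation of a path is again a path, so all $\ell$ rotations are distinct. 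Hence the factor $1/\ell$ cancels exactly, and
\[
p_N\widehat{M}_N = \sum_{\text{admissible paths}} \prod_{i=1}^{\ell}\Prb{X_\rho=x_i}\Prb{F_\rho=f_i}.
\]

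Next I will interpret the right-hand side as $\Prb{|T|=n}$ for the Boltzmann tree $T\longleftarrow\Gamma A(\rho)$. Explore the tree in depth-first (preorder) order. Vertex $i$ receives $x_i$ children from $\Pois(A(\rho))=\Pois(1)$ recursive calls, together with a forest $f_i$ from $\Gamma B(\rho)$ whose vertices count towards the size. The standard Łukasiewicz encoding, with $x_i$ as the outdegree in the recursive part, gives a bijection between runs of $\Gamma A(\rho)$ and admissible paths $(x_i,f_i)_{1\le i\le \ell}$. Here the $\ell$ vertices are the "spine" vertices created directly by $\Gamma A(\rho)$; trees inside the forests are generated by separate, independent calls and are absorbed into the $f_i$. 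The total size is $\sum_{i=1}^{\ell}(1+|f_i|)=n$, and the probability of the run equals the product of the stated factors, by independence of the Poisson variables and the $\Gamma B$ calls in Algorithm~\ref{alg:gammaa}.

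Since $\Gamma A(\rho)$ is the Boltzmann sampler, $\Prb{|T|=n}=a_n\rho^n/A(\rho)=a_n\rho^n$ by \eqref{eq:arone}. This gives $p_N=a_n\rho^n/\widehat{M}_N$. The point needing the most care is that distinct runs of the recursion correspond to distinct lists; the sum is over lists, not over isomorphism classes of trees, so the forest $f_i$ is recorded exactly as it was produced. As $a_n>0$, we get $p_N>0$, so the number of attempts is geometric and the algorithm terminates almost surely. Finally, the attempts are independent and identical, so the law of the final output is the law of one attempt conditioned on acceptance. Dividing Lemma~\ref{le:main} by $p_N$ yields the stated formula.
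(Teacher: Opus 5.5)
Your proposal is correct and follows essentially the same route as the paper. You use Lemma~\ref{le:main} together with the cycle lemma (Proposition~\ref{pro:cycle}) to cancel the factor $1/\ell$, identify the sum over admissible paths with $\Prb{|\Gamma A(\rho)|=n}=a_n\rho^n$, and divide by $p_N$ to account for repeated attempts. Your explicit check that all $\ell$ rotations of a path are distinct, and your depth-first description of runs of $\Gamma A(\rho)$, supply details that the paper leaves implicit.
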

\begin{proof}
	By Lemma~\ref{le:main} and Proposition~\ref{pro:cycle} it follows that
	\begin{align*}
		p_N &= \frac{1}{\widehat{M}_N} \sum_{\ell=1}^{n} \frac{1}{\ell} \sum_{\substack{(x_i,y_i,f_i)_{1\le i\le \ell} \\ \text{admissible bridge}}} \prod_{i=1}^{\ell} \Prb{X_\rho=x_i}\Prb{F_\rho=f_i} \\
		&= \frac{1}{\widehat{M}_N} \sum_{\ell=1}^{n} \sum_{\substack{(x_i,y_i,f_i)_{1\le i\le \ell} \\ \text{admissible path}}} \prod_{i=1}^{\ell} \Prb{X_\rho=x_i}\Prb{F_\rho=f_i}.
	\end{align*}
	
	The last sum (without the $1 / \widehat{M}_N$ prefactor) is precisely the probability that the recursive Boltzmann
	sampler $\Gamma A(\rho)$ produces a tree of size $n$. Since $\Gamma A(\rho)$ has the Boltzmann distribution and $A(\rho)=1$, this probability is $a_n\rho^n$. Therefore
	\[
		p_N = \frac{a_n\rho^n}{\widehat{M}_N}.
	\]

	It remains to account for the repetition of attempts. If $E_b$ denotes
	the event that a single attempt in Algorithm~\ref{alg:bridge} accepts and outputs a fixed admissible bridge list $b = (x_i,y_i,f_i)_{1\le i\le \ell}$, then the probability that Algorithm~\ref{alg:bridge} terminates eventually with $b$ is
	\[
		\sum_{j\ge 0}(1-p_N)^{j}\Prb{E_b} = \frac{\Prb{E_b}}{p_N}.
	\]
	By Lemma~\ref{le:main} it follows that this probability is equal to
	\[
		\frac{1}{a_n \rho^n} \frac{1}{\ell} \prod_{i=1}^{\ell} \Prb{X_\rho=x_i}\Prb{F_\rho=f_i}.
	\]
\end{proof}

\section{Sampler for P\'olya trees}

\label{sec:poly}

We present the following sampler for P\'olya trees.

\begin{boxedalgorithm}{alg:polya}{Sampler for P\'olya trees}
	\begin{enumerate}
		\item Sample the list
		\[
			(\xi_1, \zeta_1, \tau_1), \ldots, (\xi_L, \zeta_L, \tau_L)
		\]
		using Algorithm~\ref{alg:bridge}.
		\item Let
		\[	
			H_j = \sum_{i=1}^j(\xi_i - 1), \qquad 1 \le j \le L.
		\]
		Let $s \ge 1$ be the first index at which $H_j$ is minimal. Produce the cyclically rotated list of triples whose first-coordinate sequence is
		\[
			(\xi_{s+1}, \ldots, \xi_L, \xi_1, \ldots, \xi_s).
		\]
		\item  Interpret this rotated sequence as the depth-first-search ordered list of outdegrees of vertices $v_{s+1}, \ldots, v_L, v_1, \ldots, v_s$ of a plane tree $\tau$.
		\item For each $1 \le i \le L$ attach the forest $\tau_i$ to the vertex $v_i$ of $\tau$ by adding an edge between $v_i$ and each of the root vertices of the trees in $\tau_i$. Forget the plane ordering and the temporary vertex labels.
		\item Return the resulting tree.
	\end{enumerate}
\end{boxedalgorithm}

Our assumption $n \ge 2$ is not a restriction, since there exists exactly one P\'olya tree with one vertex, allowing us to handle this case in a trivial manner.

\begin{theorem}
	The output of the preceding algorithm is uniformly distributed among all P\'olya trees with $n$ vertices.
\end{theorem}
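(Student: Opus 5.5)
The plan is to show that the output tree $T$ satisfies $\Prb{T = t} = \Prb{\Gamma A(\rho) = t}/(a_n\rho^n)$ for every P\'olya tree $t$ with $n$ vertices. Since $\Gamma A(\rho)$ is a Boltzmann sampler with $A(\rho)=1$, this probability is $\rho^n/(a_n\rho^n) = 1/a_n$, which is uniform.

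\emph{Step 1: compose the bridge sampler with the cycle lemma.} By Lemma~\ref{le:evac}, Algorithm~\ref{alg:bridge} returns a fixed admissible bridge $b=(x_i,y_i,f_i)_{1\le i\le\ell}$ with probability
\[
\frac{1}{a_n\rho^n}\frac{1}{\ell}\prod_{i=1}^\ell\Prb{X_\rho=x_i}\Prb{F_\rho=f_i}.
\]
This weight is invariant under cyclic rotation of the triples. By Proposition~\ref{pro:cycle}, each admissible path $p$ of length $\ell$ has exactly $\ell$ rotations, all of them admissible bridges. Each of these rotations is mapped to $p$ by step 2 of the algorithm, since rotating to the first minimum of $H_j$ recovers the unique admissible rotation. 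I will check that these $\ell$ rotations are distinct as indexed sequences; alternatively, I will count rotation offsets rather than sequences. The cleanest route is to note that the map (bridge) $\mapsto$ (path, rotation index $s$) is a bijection onto pairs consisting of an admissible path and an $s\in[\ell]$, again by the cycle lemma. Summing over the $\ell$ preimages, the rotated list equals a given admissible path $p$ with probability
\[
\frac{1}{a_n\rho^n}\prod_{i=1}^\ell\Prb{X_\rho=x_i}\Prb{F_\rho=f_i}.
\]
The rotation carries the forests along with the outdegrees, since in step 4 forest $\tau_i$ remains attached to vertex $v_i$.

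\emph{Step 2: identify admissible paths with runs of $\Gamma A(\rho)$.} I will unroll the recursion of $\Gamma A(\rho)$ in depth-first order. The $i$-th vertex created receives $X_\rho$-many recursive children, drawn as $\Pois(A(\rho))=\Pois(1)$, together with an independent forest $F_\rho$ from $\Gamma B(\rho)$. Each tree inside $F_\rho$ is generated internally by $\Gamma B$ and is not part of the DFS outdegree sequence. Thus a run of $\Gamma A(\rho)$ corresponds to a Łukasiewicz-type sequence $(x_i,f_i)_{1\le i\le\ell}$, where the prefix condition encodes that the recursion has not yet terminated. The probability of a given run is $\prod_i\Prb{X_\rho=x_i}\Prb{F_\rho=f_i}$. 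The output tree is obtained exactly as in steps 3--4: build the plane tree from the outdegrees, attach the forests, and forget the plane structure. The resulting tree has $n$ vertices precisely when $\sum(y_i+1)=n$, that is, when the run is an admissible path. This identification is exactly what was already used in the proof of Lemma~\ref{le:evac}.

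\emph{Step 3: conclude.} Summing the probability from Step 1 over all admissible paths whose assembled tree is $t$ gives
\[
\Prb{T=t}=\frac{\Prb{\Gamma A(\rho)=t}}{a_n\rho^n}=\frac{\rho^n/A(\rho)}{a_n\rho^n}=\frac{1}{a_n}.
\]

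The main obstacle is the bookkeeping in Step 1: making sure that the factor $1/\ell$ is exactly cancelled by the $\ell$ cyclic preimages, even when the sequence is periodic. I will handle this by working with pairs (bridge, position) as in the standard proof of the cycle lemma. Because the forests are part of each triple and the rotation is determined solely by the first minimum of $H_j$, the map from a rotation offset $s\in[\ell]$ and a path to a bridge is injective on offsets. Periodic coincidences are then counted correctly, provided step 2 of the algorithm recovers the path from any of its rotations, which Proposition~\ref{pro:cycle} guarantees.
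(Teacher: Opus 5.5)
Your proposal is correct and follows the paper's proof. The $\ell$ equally weighted cyclic preimages of each admissible path cancel the factor $1/\ell$ from Lemma~\ref{le:evac}, and admissible paths are identified with depth-first runs of $\Gamma A(\rho)$ producing $n$ vertices. Your concern about periodic sequences is vacuous: $\sum_{i=1}^\ell(x_i-1)=-1$ is not divisible by any integer $\ge 2$, so no nontrivial rotation fixes a bridge, and its $\ell$ rotations are automatically distinct.
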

\begin{proof}
	Let $b = (x_i, y_i,f_i)_{1 \le i \le \ell}$ denote an admissible path of length $1 \le \ell \le n$.
	
	It follows that the event $\mathfrak{E}$ for the cyclic shift of $(\xi_i, \zeta_i, \tau_i)_{1 \le i \le L}$ in the second step to equal $b$ corresponds to precisely $\ell$ equally probable outputs of invoking Algorithm~\ref{alg:bridge} in the first step.
	
	By Lemma~\ref{le:evac}, it follows that 
	\[
		\Prb{\mathfrak{E}} = \frac{1}{a_n \rho^n} \prod_{i=1}^\ell  \Prb{X_\rho = x_i} \Prb{F_\rho = f_i}.
	\]
	
	On the other hand, the recursive Boltzmann sampler in Algorithm~\ref{alg:gammaa}, with $t=\rho$ and recursive calls listed in depth-first-search order, draws during the $i$th recursive call an independent copy $\overline{X}_i$ of $X_\rho$ and an independent copy $\overline{F}_i$ of $F_\rho$. It terminates as soon as $\sum_{i=1}^j (\overline{X}_i -1) = -1$. The total size of the generated tree is then given by $\sum_{i=1}^j (1 + |\overline{F}_i|)$.
	
	The probability for the event $\overline{\mathfrak{E}}$ that the sampler $\Gamma A(\rho)$ draws precisely the values $(x_i, f_i)_{1 \le i \le \ell}$ in this way is given by
	\[
		\Prb{\overline{\mathfrak{E}}} = \prod_{i=1}^\ell  \Prb{X_\rho = x_i} \Prb{F_\rho = f_i}.
	\]
	Furthermore, the tree generated by $\Gamma A(\rho)$ in the event $\overline{\mathfrak{E}}$ is identical to the tree generated by Algorithm~\ref{alg:polya} in the event~$\mathfrak{E}$. 
	
	Since the factor $(a_n \rho^n)^{-1}$ only depends on $n$, it follows that the outcome of Algorithm~\ref{alg:polya} is distributed like the Boltzmann distribution of $\Gamma A(\rho)$ conditioned on producing a tree with $n$ vertices. In other words, it is uniformly distributed among all P\'olya trees with $n$ vertices. This concludes the proof.
\end{proof}

\section{Expected runtime}

\begin{lemma}
	\label{le:rbound}
	There are constants $C,c>0$ depending only on $\epsilon$ such that for all $n \ge 2$, $1 \le k \le N = n-1$ and $0  \le m \le N$
	\begin{align}
		\label{eq:1bn}
		B_N^k &\le C (1- \rho)^k,\\
		\label{eq:2dn}
		D_N(k) &\le \exp\left(-c\min\left(\frac Nk,\sqrt N\right)\right),\\
		\label{eq:3skm}
		s(k,m) &\le C k^{-3/2}
		\exp\left(-c\min\left(\frac Nk,\sqrt N\right)\right).
	\end{align}
	Consequently,
	\begin{align}
		\label{eq:mn}
		\widehat{M}_N = O(N^{-3/2}).
	\end{align}
\end{lemma}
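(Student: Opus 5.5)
The plan is to bound the three factors of $s(k,m)=B_N^k\rho^m\frac{(k+m-1)!}{k!\,m!}D_N(k)$ separately and then take the maximum over $k$ and $m$. All constants will depend only on $\epsilon$ (and $\rho$).

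\emph{Bound \eqref{eq:1bn}.} Write $S=X_\rho+Y_\rho$. By \eqref{eq:bn} and the definition of $C_\sigma$,
\[
B_N=\Exb{\cosh(\lambda S)}-\rho .
\]
Since $\cosh(x)\le 1+\tfrac{x^2}{2}\cosh(x)$, we get
\[
B_N\le 1-\rho+\tfrac{\lambda^2}{2}\Exb{S^2e^{\lambda S}}.
\]
The expectation $\Exb{S^2e^{\lambda S}}$ is bounded uniformly in $N$. Indeed, $X_\rho\sim\Pois(1)$ has all exponential moments. Moreover $\Exb{x^{Y_\rho}}=\Phi(\rho x)/\Phi(\rho)$, and $\rho e^{2\lambda}\le\sqrt\rho$ is not quite available, so I would use instead that $\lambda\le\frac14\log(1/\rho)$. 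Then $e^{\lambda}\rho\le\rho^{3/4}$, and some fixed margin $\rho e^{\lambda+\delta}<\sqrt\rho$ remains, which absorbs the factor $S^2$. Hence
\[
B_N\le(1-\rho)\bigl(1+C'\lambda^2\bigr)\le(1-\rho)\exp(C'\epsilon^2/N),
\]
and therefore $B_N^k\le(1-\rho)^k e^{C'\epsilon^2}$ because $k\le N$.

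\emph{Bound \eqref{eq:2dn}.} Use the elementary inequality $\log\cosh x\ge c_0\min(x^2,x)$ for $x\ge0$. Every entry in the definition of $D_N(k)$ is at least $c_k\ge N/(2k)$, so
\[
-\log D_N(k)\ge c_0\,k\min\bigl(\lambda^2N^2/(4k^2),\,\lambda N/(2k)\bigr).
\]
There are two cases. If $\lambda=\epsilon/\sqrt N$, this is $\gtrsim\min(\epsilon^2N/k,\epsilon\sqrt N)$. Otherwise $\lambda$ is the fixed constant $\frac14\log(1/\rho)$, which happens only when $N\le 16\epsilon^2/\log(1/\rho)^2$. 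In that case the bound is even stronger, or $N$ is bounded and the statement is trivial after adjusting $C$. Both cases give \eqref{eq:2dn}.

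\emph{Bound \eqref{eq:3skm}.} Write
\[
\frac{(k+m-1)!}{k!\,m!}=\frac1k\binom{k+m-1}{m}.
\]
Combined with \eqref{eq:1bn}, this gives
\[
s(k,m)\le \frac{C}{k}\,D_N(k)\cdot\binom{k+m-1}{m}(1-\rho)^k\rho^m .
\]
The last factor is a negative binomial probability mass function with parameters $k$ and $1-\rho$. Its maximum is $O(k^{-1/2})$ uniformly in $m$, for example by a local limit bound or by Stirling's formula at the mode. Hence
\[
s(k,m)\le Ck^{-3/2}D_N(k),
\]
and \eqref{eq:2dn} yields \eqref{eq:3skm}. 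I expect this step, the uniform $O(k^{-1/2})$ bound on the negative binomial mode, to be the only mildly technical one; I would quote it or derive it via Stirling's formula.

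\emph{Bound \eqref{eq:mn}.} Split according to whether $k\ge\sqrt N$ or $k<\sqrt N$.
\begin{itemize}
\item For $k\ge\sqrt N$ we have $\min(N/k,\sqrt N)=N/k$. Then
\[
k^{-3/2}e^{-cN/k}=N^{-3/2}(N/k)^{3/2}e^{-cN/k}\le C''N^{-3/2},
\]
since $x\mapsto x^{3/2}e^{-cx}$ is bounded.
\item For $k<\sqrt N$ the minimum is $\sqrt N$, and $k^{-3/2}e^{-c\sqrt N}\le e^{-c\sqrt N}=O(N^{-3/2})$.
\end{itemize}
Taking the maximum over $k$ and $m$ gives $\widehat M_N=O(N^{-3/2})$.
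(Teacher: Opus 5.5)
Your proposal is correct and follows essentially the same route as the paper: a second-order bound on $B_N$ via the exponential moments of $X_\rho+Y_\rho$, the inequality $\log\cosh x\ge c_0\min(x^2,x)$ for $D_N(k)$, the uniform $O(k^{-1/2})$ bound on the negative binomial mass function, and a split at $k$ versus $\sqrt N$ for $\widehat M_N$. The differences are minor. Your inequality $\cosh x\le 1+\frac{x^2}{2}\cosh x$ gives a bound on $B_N$ valid for every $N$, not only asymptotically. You use $c_k\ge N/(2k)$ and monotonicity of $\log\cosh$ where the paper uses Jensen. In the case $\lambda=\frac14\log(1/\rho)$ the constant to adjust would be $c$, not $C$, although, as you observe, the bound holds directly there anyway.
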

\begin{proof}
	All constants in this proof may depend on $\epsilon$, but not on $N$, $k$ or $m$. The random variable $Y_\rho$ has probability generating series
	\[
		\Exb{x^{Y_\rho}} = \frac{\Phi(\rho x)}{\Phi(\rho)}.
	\]
	Since $\Phi$ has radius of convergence $\sqrt{\rho} > \rho$ it follows that $Y_\rho$ has finite exponential moments. Consequently, $X_\rho + Y_\rho$ has finite exponential moments.
	
	For all sufficiently large $n$ we have $\lambda = \epsilon / \sqrt{N}$, hence
	\begin{align}
		\label{eq:olambda}
		\lambda = O(1/\sqrt{N}).
	\end{align}
	Since	
	\[
		B_N = \Exb{\cosh(\lambda (X_\rho + Y_\rho)), X_\rho + Y_\rho > 0},
	\]
	and $\Prb{X_\rho + Y_\rho > 0} = 1 - \rho$ and $\lambda \to 0$ as $n \to \infty$ it follows that
	\[
		B_N = 1 - \rho + O(\lambda^2).
	\]
	By~\eqref{eq:olambda} it follows that
	\[
		B_N / (1 - \rho) = 1 + O(1/N) 
	\]
	and hence for all $1 \le k \le N$
	\[
		(B_N / (1 - \rho) )^k = (1 + O(1/N))^k \le \exp(1 + O(k/N))  = O(1).
	\]
	Hence there exists a constant $C_1>0$ that only depends on $\epsilon$ such that
	\[
		B_N^k \le C_1 (1 - \rho)^k
	\]
	for all $1 \le k \le N$. This proves~\eqref{eq:1bn}.
	
	Next, put $g(t) = \log \cosh(\lambda t)$. By convexity, Jensen's inequality, and $N = c_k k + r_k$ we obtain
	\begin{align*}
		-\log D_N(k) 	&= (k-r_k)g(c_k)+r_k g(c_k+1) \\ 
						&\ge kg( (1 - r_k/k)c_k + (r_k/k) (c_k +1) ) \\
						&= k g(N/k).
	\end{align*}
	It is easy to verify (by separating the cases $0 \le x <1$ and $x>1$) that
	\[
		\log \cosh x \ge \frac{1}{4} \min(x^2, x), \qquad x \ge 0.
	\]
	Since $\lambda=\epsilon/\sqrt N$ for all sufficiently large $n$, this yields
	\[
		-\log D_N(k) \ge \frac{k}{4} \min\left(\frac{\epsilon^2 N^2}{k^2N}, \frac{\epsilon N}{k\sqrt N}\right) = \frac{1}{4} \min\left(\frac{\epsilon^2 N}{k}, \epsilon \sqrt{N}\right)
	\]
	for large enough $n$ and $1 \le k \le N$. Since $D_N(k) < 1$  it follows that there exists $c>0$ with
	\[
		D_N(k) \le \exp\left(- c \min\left(\frac{N}{k}, \sqrt{N}\right)\right)
	\]
	for all $n \ge 2$ and $1 \le k \le N$. This proves~\eqref{eq:2dn}.
	
	Next, let $Z_k$ denote a random variable with negative binomial distribution
	\[
		\Prb{Z_k = m} = \binom{k+m-1}{m} (1 - \rho)^k \rho^m, \qquad m \ge 0.
	\]
	Then $Z_k$ is distributed like the sum of $k$ independent copies of a geometric random variable. By the local limit theorem~\cite[Thm.~4.2.1]{MR0322926} it follows that there exists a constant $C_2>0$ such that
	\begin{align*}
		\sup_{m \ge 0} \Prb{Z_k = m} \le C_2 k^{-1/2}, \qquad k\ge1.
	\end{align*}
	Using~\eqref{eq:1bn} and~\eqref{eq:2dn} it follows that for all $n \ge 2$ and all $1 \le k \le N$, $0\le m\le N$,
	\begin{align*}
		s(k,m) 	&= B_N^k \rho^m \frac{(k+m-1)!}{k!m!} D_N(k) \\
		&\le C_1 (1 - \rho)^k \rho^m k^{-1} \binom{k + m - 1}{m} D_N(k) \\
		&= C_1 k^{-1} \Prb{Z_k = m} D_N(k) \\
		&\le C_1 C_2 k^{-3/2} D_N(k) \\
		&\le  C_1 C_2 k^{-3/2}\exp\left(-c\min\left(\frac Nk,\sqrt N\right)\right).
	\end{align*}
	Clearly~\eqref{eq:1bn} remains valid if we replace $C$ by a larger constant. Hence this completes the verification of~\eqref{eq:1bn},~\eqref{eq:2dn} and~\eqref{eq:3skm}.

	Finally, with $x := N/k \le N$ we have
	\begin{align*}
		s(k,m) 	\le C N^{-3/2} x^{3/2} \exp\left(-c\min(x,\sqrt N)\right).
	\end{align*}
	We have for $0 \le x \le \sqrt{N}$
	\[
		x^{3/2} \exp\left(-c\min(x,\sqrt N)\right) = x^{3/2} \exp\left(-c x\right) \le \sup_{t \ge 0} t^{3/2} \exp\left(-c t\right) < \infty,
	\]
	and for $\sqrt{N} \le x \le N$
	\[
		x^{3/2} \exp\left(-c\min(x,\sqrt N)\right) \le N^{3/2} \exp\left(-c \sqrt{N} \right) \le \sup_{t \ge 0} t^{3/2} \exp\left(-c \sqrt{t}\right).
	\]
	Consequently,
	\begin{align*}
		\widehat{M}_N 	= \max_{\substack{1 \le k \le N\\ 0\le m \le N}} s(k,m) = O(N^{-3/2}).
	\end{align*}
	This completes the proof.
\end{proof}

\begin{lemma}
	\label{le:runtimeingredients}
	Uniformly for $n\ge2$, with $N=n-1$, the following assertions hold.
	\begin{enumerate}
		\item The preprocessing needed for Algorithm~\ref{alg:bridge}, including
		the computation of $\widehat{M}_N$, the factorial table needed in the
		Bernoulli probability $R$, and the constants appearing in $R$, costs
		$O(n)$ arithmetic operations.
		
		\item A single call to Algorithm~\ref{alg:qn} has expected runtime $O(1)$.
		
		\item A single attempt in Algorithm~\ref{alg:bridge} has expected runtime
		$O(n)$.
		
		\item The expected number of attempts made by Algorithm~\ref{alg:bridge} before accepting is bounded uniformly.
		
		\item The expected runtime of Algorithm~\ref{alg:bridge} is $O(n)$.
	\end{enumerate}
\end{lemma}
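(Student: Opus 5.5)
I will verify the five assertions in order, since each builds on the previous ones.

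\emph{Assertion (1).} I will use Propositions~\ref{pro:rho}, \ref{pro:coef} and \ref{pro:phi}, treating the needed constants ($\rho$, $\Phi(\rho)$, $\Phi(\rho e^{\pm\lambda})$) as precomputed to working precision. In the unit-cost real RAM the exact values are accessed lazily through these approximation schemes, and for a fixed target precision this costs $O(1)$. With these in hand, $C_\pm$ and $B_N$ follow from \eqref{eq:bn} in $O(1)$ operations.

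The table of factorials (or of the ratios $(k+m-1)!/(k!m!)$) up to $2N$ costs $O(N)$ operations. The values $D_N(k)$ for $1\le k\le N$ cost $O(1)$ each, using $\exp$, $\log$ and $\cosh$ as unit-cost operations: compute $\log D_N(k)$ and exponentiate, rather than raising to a power by repeated multiplication. By the monotone ratio remark preceding Lemma~\ref{le:bern}, $\widehat M_N$ requires only $O(1)$ evaluations of $s(k,m)$ per $k$, so $O(N)$ in total.

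\emph{Assertion (2).} Choosing $\sigma_0$ costs $O(1)$. One pass draws $\Pois(e^{\sigma_0\lambda})$ in bounded expected time by Proposition~\ref{pro:poiss} (note $e^{\sigma_0\lambda}\le \rho^{-1/4}$), and draws $\Gamma B(\rho e^{\sigma_0\lambda})$ in uniformly bounded expected time by Lemma~\ref{le:boltzb}, since $\rho e^{\sigma_0\lambda}\le\rho^{3/4}$. The resampling loop succeeds on each pass with probability $C_\sigma/(\rho+C_\sigma)\ge \Pr{X_{\rho e^{-\lambda}}+Y_{\rho e^{-\lambda}}>0}$. This is at least $1-e^{-e^{-\lambda}}\ge 1-e^{-\rho^{1/4}}$, which is bounded below uniformly. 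Hence the expected number of passes is $O(1)$.

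\emph{Assertion (3).} Each triple has $X_i+Y_i\ge1$, so an attempt draws at most $N$ triples before $S_j\ge N$. By Wald's identity and assertion (2), the expected cost of step 1 is $O(N)$. Step 2 costs $O(K)$. The probability $R$ in step 3 is computed in $O(K)$ operations via the table. I will do this through logarithms, to avoid the question of whether $\cosh$ products are unit cost.

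The one point needing care is the exact Bernoulli test with probability $R$. In the real-RAM model a single uniform comparison suffices, so this costs $O(K)$. Step 4 (a uniform $K$-subset of $[L]$) and step 5 cost $O(L)=O(n)$, because $L\le n$ is forced by the admissibility relation $\sum(\zeta_i+1)=n$.

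\emph{Assertion (4).} This is the substantive step and the main obstacle. By Lemma~\ref{le:evac}, the number of attempts is geometric with mean $1/p_N=\widehat M_N/(a_n\rho^n)$. By \eqref{eq:coef}, $a_n\rho^n\sim c_A n^{-3/2}$, and by \eqref{eq:mn}, $\widehat M_N=O(N^{-3/2})$. Hence $1/p_N=O(1)$ uniformly for large $n$. Finitely many small $n$ are trivially bounded since each $p_N>0$. All the difficulty is concentrated in Lemma~\ref{le:rbound}, which is already available, so this step reduces to combining the two asymptotics.

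\emph{Assertion (5).} The attempts are i.i.d. until the first acceptance. By Wald's identity, the expected total cost is $\Ex{\text{number of attempts}}\cdot O(n)$ plus the $O(n)$ preprocessing. Any dependence between an attempt's cost and its acceptance is harmless here, since Wald's identity applies to the stopping time defined by acceptance, and the per-attempt cost bound in (3) holds unconditionally. Combining this with (1), (3) and (4) gives $O(n)$.
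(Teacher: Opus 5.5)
Your proposal is correct and follows the paper's proof essentially step by step. Specifically:
\begin{itemize}
\item (1) You compute $\widehat{M}_N$ from $O(1)$ candidates per $k$, using a (log-)factorial table.
\item (2) You use the bounds $e^{\sigma\lambda}\le\rho^{-1/4}$ and $\rho e^{\sigma\lambda}\le\rho^{3/4}$ to invoke Proposition~\ref{pro:poiss} and Lemma~\ref{le:boltzb}.
\item (3) You bound each attempt by at most $N$ triples.
\item (4) You combine $p_N=a_n\rho^n/\widehat{M}_N$ with Otter's asymptotics and \eqref{eq:mn}.
\item (5) Your explicit use of Wald's identity just makes the paper's final multiplication precise.
\end{itemize}
The one blemish is notational, in (2). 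The resampling bound should use the sampler's own draw $\Pois(e^{-\lambda})$, as the paper does, rather than $X_{\rho e^{-\lambda}}\sim\Pois(A(\rho e^{-\lambda}))$. Since $A(\rho e^{-\lambda})\le e^{-\lambda}$, your step ``at least $1-e^{-e^{-\lambda}}$'' can fail for the latter, although a uniform positive lower bound survives either way.
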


\begin{proof}
	We first discuss preprocessing. The constants $C_+$, $C_-$, and $B_N$ are computed once from their definitions and from~\eqref{eq:bn}. As observed before Lemma~\ref{le:bern}, once $B_N$ is known the maximum
	\[
		\widehat{M}_N = \max_{\substack{1\le k\le N\\0\le m\le N}} s(k,m)
	\]
	can be found by checking only $O(1)$ candidate values of $m$ for each $1\le k\le N$. Hence the computation of $\widehat{M}_N$ costs $O(n)$ arithmetic operations.
	
	In order to make the cost of the Bernoulli test explicit, we evaluate the acceptance probability $R$ in logarithmic form. Precompute
	\[
		\ell_j = \log(j!), \qquad 0 \le j \le 2N,
	\]
	using  the recurrence $\ell_0 = 0$ and $\ell_j = \ell_{j-1} + \log j$. This costs $O(n)$ arithmetic operations. During the computation of $\widehat{M}_N$, each candidate value of $s(k,m)$ may then be evaluated through
	\[
		\log s(k,m) = k\log B_N + m \log \rho + \ell_{k+m-1} - \ell_k - \ell_m + \log D_N(k),
	\]
	where $D_N(k)$ is defined before Lemma~\ref{le:bern}. This is constant time per candidate. Thus the preprocessing, including the values needed later for $R$, is $O(n)$.
	
	We next prove that one call to Algorithm~\ref{alg:qn} has uniformly bounded
	expected cost. Put
	\[
		\lambda_0 = \frac{1}{4} \log(1/\rho).
	\]
	By the definition of $\lambda$ in the randomised tilting construction, $0 < \lambda \le \lambda_0$, and therefore
	\[
		\rho e^{\sigma\lambda} \le \rho e^{\lambda_0} = \rho^{3/4} < \sqrt{\rho}, \qquad \sigma\in\{+,-\}.
	\]
	The Poisson random variable in Algorithm~\ref{alg:qn} has parameter $e^{\sigma\lambda} \in [e^{-\lambda_0}, e^{\lambda_0}]$. Hence its generation cost is uniformly bounded in expectation.

	It remains to bound the expected runtime for a call to $\Gamma B(\rho e^{\sigma\lambda})$. This follows from Lemma~\ref{le:boltzb} and $\rho e^{\sigma\lambda} \le \rho^{3/4}$.
	
	The rejection step inside Algorithm~\ref{alg:qn} also changes the cost only by a bounded factor. Indeed, for both signs $\sigma$,
	\[
		\Prb{(X,Y)\ne(0,0)} \ge \Prb{\Pois(e^{\sigma\lambda})\ge1} \ge 1-\exp(-e^{-\lambda_0}) >0.
	\]
	Thus the number of resampling trials in Algorithm~\ref{alg:qn} is stochastically dominated by a geometric random variable with uniformly bounded expectation. This proves that one sample from $q_N$, whose correctness is established in the lemma following~\eqref{eq:bn}, has expected cost $O(1)$.
	
	We now consider a single attempt of Algorithm~\ref{alg:bridge}. The triples sampled from $q_N$ satisfy $(X_i,Y_i)\ne(0,0)$ by definition of $q_N$ in~\eqref{eq:qdist}, and hence every increment
	$X_i+Y_i$ is a positive integer. Therefore, before the partial sum
	\[
		S_j=\sum_{i=1}^j(X_i+Y_i)
	\]
	first reaches or exceeds $N$, at most $N$ triples can be sampled. Since sampling one triple costs $O(1)$ in expectation, the expected cost of producing the triples in one attempt is $O(n)$.
	
	It remains to account for the Bernoulli test in the third step of Algorithm~\ref{alg:bridge}. During the sampling of the triples we maintain the accumulated value
	\[
		H_K = \sum_{i=1}^K \log\cosh\bigl(\lambda(X_i+Y_i)\bigr),
	\]
	which costs constant time per sampled triple. If $S_K = N$ and $M \ge 0$, then
	\[
		K\le N, \qquad \sum_{i=1}^K X_i\le N, \qquad K + M - 1 = \sum_{i=1}^K X_i \le N.
	\]
	Using the precomputed table $(\ell_j)$, the logarithm of the Bernoulli probability $R$ can therefore be evaluated in constant time as
	\[
		\log R = K \log B_N + M \log \rho + \ell_{K + M - 1} - \ell_K - \ell_M - H_K - \log \widehat{M}_N.
	\]
	The Bernoulli test may then be performed by drawing $U \sim \mathrm{Unif}(0,1)$ and checking whether $\log U\le \log R$. Lemma~\ref{le:bern}, together with the definition of $\widehat{M}_N$, ensures that $R\in[0,1]$, so this test is valid. Thus calculating $R$ does not increase the order of the
	runtime. Even without maintaining $H_K$ during the sampling, recomputing the product in $R$ would cost only $O(K)\le O(N)$, so the attempt would still have linear expected cost.
	
	The remaining operations in one attempt are also linear. If the attempt is not rejected before the insertion step, then $L=K+M\le n$, and choosing the $K$-element subset of $\{1,\ldots,L\}$ and forming the output list costs $O(L) \le O(n)$. Hence one attempt of Algorithm~\ref{alg:bridge}, including the calculation of $R$, has expected cost $O(n)$.
	
	Finally, we bound the number of attempts in Algorithm~\ref{alg:bridge}. By Lemma~\ref{le:evac}, the acceptance probability of one attempt is
	\[
		p_N = \frac{a_n \rho^n}{\widehat{M}_N}.
	\]
	Otter's coefficient asymptotic~\eqref{eq:coef} gives
	\[
		a_n\rho^n \sim c_A n^{-3/2},
	\]
	while Lemma~\ref{le:rbound}, specifically~\eqref{eq:mn}, gives
	\[
		\widehat{M}_N = O(N^{-3/2}).
	\]
	These two estimates imply that there exists a constant $p_*>0$ such that
	\[
		p_N \ge p_*
	\]
	for all $n\ge2$. Therefore the number of attempts in Algorithm~\ref{alg:bridge} is geometrically distributed with uniformly bounded expectation. 
	
	Since the expected number of attempts is bounded and each attempt has expected runtime $O(n)$, the expected runtime of Algorithm~\ref{alg:bridge} is $O(n)$. This completes the proof.
\end{proof}

\begin{theorem}
	\label{te:runtime}
	The expected runtime of Algorithm~\ref{alg:polya} is $O(n)$ in the
	computational model specified above.
\end{theorem}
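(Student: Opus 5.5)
The plan is to split the runtime of Algorithm~\ref{alg:polya} into its four steps and show that each costs $O(n)$ in expectation. Most of the work is already done by Lemma~\ref{le:runtimeingredients}.

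\emph{Step 1.} The first step is a single call to Algorithm~\ref{alg:bridge}. Part (5) of Lemma~\ref{le:runtimeingredients} gives expected cost $O(n)$ for it, and this includes the $O(n)$ preprocessing from part (1). It only remains to show that the post-processing costs $O(n)$ deterministically, given the output list. For that, first note that the output satisfies $L \le n$, since $\sum_{i=1}^L(\zeta_i+1)=n$.

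\emph{Step 2.} The prefix sums $H_j$, $1\le j\le L$, are computed in $O(L)$ operations. Tracking the running minimum and the first index at which it is attained gives $s$ in one pass. The cyclic rotation of the list is then produced in $O(L)$ time.

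\emph{Step 3.} Building the plane tree $\tau$ from its depth-first-search outdegree sequence is the standard stack-based construction. Proposition~\ref{pro:cycle} guarantees that the rotated sequence is a valid Łukasiewicz word. One processes the vertices in order, pushing each vertex together with its remaining number of children and popping exhausted vertices, so each vertex is pushed and popped once. This costs $O(L)$ operations.

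\emph{Step 4.} Attaching each forest $\tau_i$ to $v_i$ adds one edge per tree in $\tau_i$, and the number of trees is at most $|\tau_i|=\zeta_i$. So the total cost is $O(\sum_i \zeta_i)=O(n)$, assuming the forests are stored as explicit structures of linear size. This holds because $\Gamma B$ builds its $i$ copies explicitly, and that cost was already charged within Lemma~\ref{le:boltzb}. Forgetting the plane order and the labels is just reading off the unordered adjacency structure, which is again linear.

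\emph{Main subtlety.} The one step needing care is the output representation: copying the $i$ identical copies of trees inside the forests must not cost more than was already accounted for. I would argue that the copies are produced inside the $q_N$ samples, whose cost was bounded by the forest size through $Y_s$ in Lemma~\ref{le:boltzb}. All later manipulations touch each vertex $O(1)$ times. Summing, the expected cost is $O(n)$ from Step 1 plus deterministic $O(n)$ for Steps 2–4. This gives Theorem~\ref{te:runtime}, with the case $n=1$ handled trivially.
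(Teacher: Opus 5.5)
Your proposal is correct and follows the paper's proof. You use the $O(n)$ expected cost of Algorithm~\ref{alg:bridge} from Lemma~\ref{le:runtimeingredients}, plus a deterministic $O(n)$ bound for the rotation, tree construction and forest attachment based on $L \le n$ and total forest size $n-L$; the only cosmetic difference is that the paper adds the $O(n)$ preprocessing of part (1) as a separate summand rather than reading it into part (5), which changes nothing.
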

\begin{proof}
	By Lemma~\ref{le:runtimeingredients} the expected runtime of Algorithm~\ref{alg:bridge} is $O(n)$. It remains to account for the deterministic work performed after the bridge has been sampled. The output of Algorithm~\ref{alg:bridge} satisfies
	\[
		\sum_{i=1}^L(\zeta_i + 1) = n
	\]
	as stated after Algorithm~\ref{alg:bridge}. In particular, $L \le n$, and the total size of the attached forests is $n-L$. Therefore computing the partial sums
	\[
		H_j = \sum_{i=1}^j(\xi_i - 1), \qquad 1 \le j \le L,
	\]
	finding the first index at which they attain their minimum, performing the cyclic rotation from Algorithm~\ref{alg:polya}, constructing the depth-first-search tree, and attaching all forests together cost deterministic time $O(n)$.
	
	Combining the $O(n)$ preprocessing cost, the $O(n)$ expected cost of Algorithm~\ref{alg:bridge}, and the final deterministic $O(n)$ construction cost proves that Algorithm~\ref{alg:polya} has expected runtime $O(n)$.
\end{proof}

\section*{Acknowledgement}
This research was funded in part by the Austrian Science Fund (FWF) 10.55776/PAT6732623 and 10.55776/PAT1382025. For open access purposes, the authors have applied a CC BY public copyright license to any author-accepted manuscript version arising from this submission.

\bibliographystyle{abbrv}
\bibliography{polya}

\begin{thebibliography}{10}

\bibitem{zbMATH06774439}
A.~Bacher, O.~Bodini, and A.~Jacquot.
\newblock Efficient random sampling of binary and unary-binary trees via
  holonomic equations.
\newblock {\em Theor. Comput. Sci.}, 695:42--53, 2017.

\bibitem{zbMATH08207340}
L.~Bartholdi and P.~Diaconis.
\newblock An algorithm for uniform generation of unlabeled ({P{\'o}lya}) trees.
\newblock {\em Forum Math. Sigma}, 14:27, 2026.
\newblock Id/No e76.

\bibitem{zbMATH06244239}
O.~Bodini, D.~Gardy, and A.~Jacquot.
\newblock Asymptotics and random sampling for {BCI} and {BCK} lambda terms.
\newblock {\em Theor. Comput. Sci.}, 502:227--238, 2013.

\bibitem{zbMATH06683520}
O.~Bodini and Y.~Ponty.
\newblock Multi-dimensional {Boltzmann} sampling of languages.
\newblock In {\em Proceeding of the 21st international meeting on
  probabilistic, combinatorial, and asymptotic methods in the analysis of
  algorithms (AofA'10), Vienna, Austria, June 28 -- July 2, 2010}, pages
  49--64. Nancy: The Association. Discrete Mathematics \& Theoretical Computer
  Science (DMTCS), 2010.

\bibitem{zbMATH06109904}
O.~Bodini, O.~Roussel, and M.~Soria.
\newblock Boltzmann samplers for first-order differential specifications.
\newblock {\em Discrete Appl. Math.}, 160(18):2563--2572, 2012.

\bibitem{MR2810913}
M.~Bodirsky, {\'E}.~Fusy, M.~Kang, and S.~Vigerske.
\newblock Boltzmann samplers, {P}\'olya theory, and cycle pointing.
\newblock {\em SIAM J. Comput.}, 40(3):721--769, 2011.

\bibitem{zbMATH05039060}
M.~Bodirsky and M.~Kang.
\newblock Generating outerplanar graphs uniformly at random.
\newblock {\em Comb. Probab. Comput.}, 15(3):333--343, 2006.

\bibitem{zbMATH06195431}
K.~Bringmann and T.~Friedrich.
\newblock Exact and efficient generation of geometric random variates and
  random graphs.
\newblock In {\em Automata, languages, and programming. 40th international
  colloquium, ICALP 2013, Riga, Latvia, July 8--12, 2013, Proceedings, Part I},
  pages 267--278. Berlin: Springer, 2013.

\bibitem{MR2888318}
L.~Devroye.
\newblock Simulating size-constrained {G}alton-{W}atson trees.
\newblock {\em SIAM J. Comput.}, 41(1):1--11, 2012.

\bibitem{MR2483235}
P.~Flajolet and R.~Sedgewick.
\newblock {\em Analytic combinatorics}.
\newblock Cambridge University Press, Cambridge, 2009.

\bibitem{fusypanagiotou2026}
{\'E}.~Fusy and K.~Panagiotou.
\newblock {\em Manuscript in preparation}.

\bibitem{MR0322926}
I.~A. Ibragimov and Y.~V. Linnik.
\newblock {\em Independent and stationary sequences of random variables}.
\newblock Wolters-Noordhoff Publishing, Groningen, 1971.
\newblock With a supplementary chapter by I. A. Ibragimov and V. V. Petrov,
  Translation from the Russian edited by J. F. C. Kingman.

\bibitem{MR2908619}
S.~Janson.
\newblock Simply generated trees, conditioned {G}alton-{W}atson trees, random
  allocations and condensation.
\newblock {\em Probab. Surv.}, 9:103--252, 2012.

\bibitem{MR0025715}
R.~Otter.
\newblock The number of trees.
\newblock {\em Ann. of Math. (2)}, 49:583--599, 1948.

\bibitem{zbMATH07755473}
K.~Panagiotou, L.~Ramzews, and B.~Stufler.
\newblock Exact-size sampling of enriched trees in linear time.
\newblock {\em SIAM J. Comput.}, 52(5):1097--1131, 2023.

\bibitem{MR3773800}
K.~Panagiotou and B.~Stufler.
\newblock Scaling limits of random {P}\'{o}lya trees.
\newblock {\em Probab. Theory Related Fields}, 170(3-4):801--820, 2018.

\bibitem{zbMATH01195781}
J.~Pitman.
\newblock Enumerations of trees and forests related to branching processes and
  random walks.
\newblock In {\em Microsurveys in discrete probability. DIMACS workshop,
  Princeton, NJ, USA, June 2--6, 1997}, pages 163--180. Providence, RI: AMS,
  American Mathematical Society, 1998.

\bibitem{MR1577579}
G.~P{\'o}lya.
\newblock Kombinatorische {A}nzahlbestimmungen f\"ur {G}ruppen, {G}raphen und
  chemische {V}erbindungen.
\newblock {\em Acta Math.}, 68(1):145--254, 1937.

\bibitem{Sportiello2021}
A.~Sportiello.
\newblock Boltzmann sampling of irreducible context-free structures in linear
  time, 2021.

\bibitem{zbMATH08207337}
B.~Stufler.
\newblock Probabilistic enumeration and equivalence of nonisomorphic trees.
\newblock {\em Discrete Math. Theor. Comput. Sci.}, 27(3):14, 2025.
\newblock Id/No 18.

\bibitem{zbMATH04102170}
L.~Tak{\'a}cs.
\newblock Ballots, queues and random graphs.
\newblock {\em J. Appl. Probab.}, 26(1):103--112, 1989.

\bibitem{zbMATH03476417}
J.~G. Wendel.
\newblock Left-continuous random walk and the {Lagrange} expansion.
\newblock {\em Am. Math. Mon.}, 82:494--499, 1975.

\bibitem{fusy2026leapgeneratorscompositionschemes}
Éric Fusy and C.~Pivoteau.
\newblock Leap generators for composition schemes.
\newblock {\em arxiv:2605.06471}, 2026.

\end{thebibliography}

\end{document}